\title[Explicit feedback synthesis via quasi-interpolation]{Explicit feedback synthesis for nonlinear robust model predictive control driven by quasi-interpolation}
\author{Siddhartha Ganguly and Debasish Chatterjee}
\thanks{The current manuscript was submitted to IEEE Transactions on Automatic Control on 15 October, 2022. Siddhartha Ganguly is supported by the PMRF grant RSPMRF0262, from the Ministry of Human Resource Development, Govt.\ of India. The first author thanks Ravi Banavar for his encouraging comments.}
\thanks{\emph{Author Information}: \\
Address: Siddhartha Ganguly, and Debasish Chatterjee are with Systems and Control Engineering, Indian Institute of Technology Bombay, India.\\
Emails: \texttt{(sganguly,dchatter)@iitb.ac.in}.\\
Homepages: (SG): \url{https://sites.google.com/view/siddhartha-ganguly}; (DC): \url{https://www.sc.iitb.ac.in/~chatterjee}
}
\keywords{model predictive control, robust control, control policies, uniform approximation}
\begin{document}


\maketitle

\begin{abstract}
	We present QuIFS (Quasi-Interpolation driven Feedback Synthesis): an offline feedback synthesis algorithm for explicit nonlinear robust minmax model predictive control (MPC) problems with guaranteed quality of approximation. The underlying technique is driven by a particular type of grid-based quasi-interpolation scheme. The QuIFS algorithm departs drastically from conventional approximation algorithms that are employed in the MPC industry (in particular, it is neither based on multi-parametric programming tools nor does it involve kernel methods), and the essence of its point of departure is encoded in the following \embf{challenge-answer} approach: Given an error margin \(\eps>0\), compute in a single stroke a feasible feedback policy that is \emph{uniformly} $\varepsilon$-close to the optimal MPC feedback policy for a given nonlinear system subjected to constraints and bounded uncertainties. Closed-loop stability and recursive feasibility under the approximate feedback policy are also established. We provide a library of numerical examples to illustrate our results.
\end{abstract}

\newcommand{\paramh}{h}
\newcommand{\paramD}{\mathcal{D}}

\section{Introduction}
\label{sec:intro}

Model predictive control (MPC) is a model-based dynamic optimization method and it has evolved into one of the most practical and suitable control synthesis methodology over the years. MPC has found its way into several industries such as chemical, oil and gas production, electrical, finance, and robotics, apart from a host of others. It distinguishes itself in being perhaps the most versatile technique for incorporating constraints of a given problem directly at the synthesis stage, thereby directly adhering to the idea behind equipping machines with intelligence and achieving a clean technique of automation. The readers are pointed towards the survey articles \cite{ref:May-14, ref:May-16} and the detailed texts \cite{ref:RawMayDie-17, ref:GruPan-17} for a panoramic picture of the area. 

\subsection*{Background}

It is well-known \cite{ref:MAYNE-survey} that an MPC strategy provides a \emph{feedback} implicitly because the control action is dependent on the states at each discrete time instant \(t\). The task of \embf{explicit MPC} is to extract this implicit feedback and furnish the corresponding feedback mapping.\footnote{Of course, a \emph{mapping} can be defined when at each initial state there exists a unique solution of a given MPC problem; even otherwise, it is conceivably possible to appeal to the axiom of choice to define a map, and/or to the diverse array of selection theorems in the theory of set-valued maps to construct such a map with specific properties.}

The industry of \emph{explicit MPC} has a rich history, and we point the interested reader to the detailed survey article \cite{ref:AleBem-09} for a sweeping perspective of the area. The importance of the explicit method is underscored by the fact that the online computation of receding horizon control law at each \(t\) may be replaced by a function evaluation at each given state. This mechanism, at least in spirit, speeds up the computation of the MPC action by orders of magnitude, and primarily for this reason explicit MPC has found applications in several industrial plants; we refer the readers to \cite{ref:Morari_explicit, ref:kva:EMPC:maglev, ref:EMPC:Deepak:thesis} for more information. Most of the techniques in explicit MPC rely essentially on multiparametric programming \cite{ref:BemMorDuaPis-02, ref:Kva:jone:real_time, ref:kva:EMPC:region_less}, and while exact characterizations of optimal feedbacks are available for a wide class of systems, for numerical tractability reasons most of the applicable results are limited to the \emph{linear/affine} models. In this linear/affine regime, under mild hypotheses, the optimal implicit feedback turns out to be a piecewise affine mapping \cite{ref:BemMorDuaPis-02}. Several approaches to explicit MPC for nonlinear models have been developed, and ``approximation'' seems to be the driving force behind them; naturally, such efforts are accompanied by the key computational challenge of our times --- \textsl{the curse of dimensionality}, and that problem persists herein. Among the vast literature on the subject, we mention the following: A binary search tree and orthogonal decomposition-based algorithm to approximate the feedback function via piecewise affine approximations was established in \cite{ref:TJohan-aut-2004-NMPC} and its precursors. In \cite{ref:Canale2009-BookCh} a survey of set membership-based approximation methods for linear and nonlinear MPC problems was provided. Offline approximation of possibly discontinuous predictive laws was studied in \cite{ref:Pin-parsini-2012-IJC-di}. A multiresolution wavelet-based approximation method was introduced for both linear \cite{ref:summers-multires} and nonlinear MPC \cite{Raimondo2012} with guaranteed stability and feasibility of the feedback system; these contributions are perhaps closest to our approach although the estimates provided herein are \emph{uniform} and rigorous.

In the specific context of robust minmax MPC, offline explicit MPC techniques reported in \cite{ref:bemporad2003min, ref:Gao:EMPC, ref:DePena_Explicit_minmax}, are based on a partitioning of the state space into critical regions. These techniques cater to classes of linear controlled dynamical systems with bounded uncertainties. Most of these algorithms may fail to generate explicit control laws in situations where the prediction horizon is large. The primary reason for this is the potentially exponential growth of the number of polytopic regions with the number of constraints. We provide one such example where the explicit MPC algorithm terminated unsuccessfully due to the presence of large number of the polytopic regions when (approximate) multi-parametric programming-based tools were employed.\footnote{In contrast, our technique QuIFS produced visibly better results in terms of closeness to the online receding horizon control trajectories and the approximation quality; see Example \eqref{exmp:lmpc_0} in \S\eqref{sec:num_exp}.}

In the context of moderate-dimensional systems there are several explicit MPC algorithms based on neural networks (NN). Early contributions from the NN-based approximation perspective can be found in \cite{ref:parsini1995}, and more recently, NN-based approximation algorithms were advanced for linear dynamical systems in \cite{ref:karg:EMPC, ref:Kvasnica:EMPC, ref:chen2018approximating} but without guarantees of robust explicit approximation and closed-loop stability. In \cite{ref:Mesbah:EMPC} a projection operator-based and NN-enabled explicit MPC algorithm was proposed for linear systems and closed-loop guarantees were given without a control on the approximation error. Reformulating the closed-loop system in a diagonal differential inclusion, the authors in \cite{ref:nguyen2021robust} derived stability certificates for the NN-based MPC controller. In \cite{ref:hertneck2018learning} an NN-based approximation algorithm was established with soft guarantees of robust closed-loop stability; \cite{ref:rose2023learning} employed a Gaussian process (GP) framework to approximate the feedback law with soft guarantees of robust closed-loop stability and feasibility. Both of these preceding works provide probabilistic guarantees of approximation as opposed to uniform ones. In contrast to these algorithms, our technique provides one-shot uniform approximation guarantees along with stability and recursive feasibility guarantees; these guarantees are robust, and do \emph{not} involve probabilistic (soft) bounds. We also draw attention to \cite{ref:GanGupCha-23} where a NN-enabled explicit MPC scheme was advanced for linear systems with guarantees of preassigned uniform error (as distinguished from soft guarantees), stability and feasibility. We refer the readers to \cite{ref:weinan2022empowering} and the references therein for a recent survey on machine learning-enabled methods for MPC. We highlight the main features and our contributions next.
\subsection*{Our contributions}

In the article, we establish a novel grid-based technique --- QuIFS (Quasi-Interpolation driven Feedback Synthesis), for constructing explicit feedback maps based on quasi-interpolation. It is important to point out several aspects of our results at this stage, and we shall adopt a comparative rhetoric for listing the features of our technique for the ease of delineating our contributions:

\begin{enumerate}[label=(\Alph*), leftmargin=*, widest=I, align=left]
\item \label{prop:applicability_type} QuIFS is driven by a special type of interpolation on a uniform cardinal grid and provides strict guarantees of uniform approximation on potentially \emph{unbounded} sets under mild hypotheses. While we do not furnish the optimal feedback (we do \emph{not} solve the Bellman equation/recursion), we demonstrate that the difference between the optimal feedback and the approximating map can be made smaller than any \emph{pre-assigned} error tolerance \emph{in the uniform norm over the set of admissible states}. No other technique is, at the moment, capable of providing such strong guarantees.

\item \label{prop:applicability} 
The QuIFS algorithm applies to \emph{nonlinear systems} and \emph{non-convex cost functions} whenever the underlying optimal control problem admits a unique solution. It relies on coarse properties of the optimal feedback such as Lipschitz continuity, etc., rather than more detailed local structural properties; information concerning such coarse properties may be distilled directly from the problem data. This is a crucial point of departure from multiparametric techniques and deserves to be underlined. Of course, the optimal feedback is piecewise affine in the linear/affine setting under appropriate hypotheses; this important observation is now classical and follows from the central results of multiparametric programming in this context. In contrast, nonlinear models with non-convex cost functions may lead to a high degree of structural complexity, over and beyond the piecewise affine regime, of the optimal feedback, making it extremely difficult to find appropriate parametrization of such feedback maps.\footnote{In view of the current state of affairs of numerical analysis, parametrizing the optimal feedback (e.g., as along the lines of the Ritz method) does not appear to be a promising direction.} Ours being an interpolation-driven technique, an approach via multiparametric programming turns out to be unnecessary in our setting; merely the ability to compute solutions to finite-horizon optimal control problems at each point of the feasible set is sufficient.
	\item \label{prop:complexity} The complexity of the offline computations associated with QuIFS, \emph{as it stands today}, is exponential in the number of states because the technique relies on a uniform grid. Recall \cite[\S4.4]{ref:BemMorDuaPis-02} that the complexity of standard explicit MPC for linear/affine models scales exponentially with the number of constraints in the worst case. For us, however, the complexity scales exponentially \emph{only} with the state-dimension, and the number of constraints plays \emph{no} role.
	\item \label{prop:waterbed} QuIFS provides approximation error guarantees measured with respect to the \emph{uniform metric}. Traditional function approximations based on sampling techniques are broadly classifiable into parametric or non-parametric types. In the former case, expansion of the optimal feedback in terms of, e.g., Ritz bases or dictionaries, and in the latter case, expressing the unknown function as, e.g., a member of some reproducing kernel Hilbert space, are both rooted in Hilbert space methods. There appears to be no reasonable mechanism to prevent the Gibbs's phenomenon (waterbed effect) that typically leads to potentially wild and large-amplitude oscillations at the boundary of the feasible domains, and such approaches do \emph{not}, in general, provide \emph{uniform approximation error guarantees}. Since the uniform error metric is employed herein and is indeed necessary to ensure recursive feasibility (see \S\ref{subsec:BLMPC_to_ARMPC} for a discussion)\footnote{Probabilistic guarantees by themselves \emph{cannot} ensure recursive feasibility.}, our approach provides perhaps the closest approximation of the true optimal feedback available today, and this is achieved \emph{without} solving the associated Bellman equations/recursions.
	\item \label{prop:tools} Our chief technical tool --- a particular type of quasi-interpolation --- conforms to neither of the two types of interpolation-based approximation techniques mentioned in \ref{prop:waterbed}, and departs sharply from the typical approximation theoretic tools that ensure asymptotic convergence as the length of the sampling interval converges to \(0\). The upper bound on the uniform error is a function \(\psi_0(\cdot, \cdot, \cdot)\) of \emph{three} parameters --- the discretization interval \(\paramh > 0\), the shape parameter \(\paramD > 0\), and the truncation parameter \(\rzero>0\). For a \emph{prespecified uniform error margin} \(\eps > 0\), it is always possible to pick the triplet \((\paramh, \paramD, \rzero)\) such that \(\psi_0(\paramh, \paramD, \rzero) \le \eps\).\footnote{However, for a fixed $\paramD > 0$, we do \emph{not} have $\psi_0(\paramh, \paramD, \rzero) \xrightarrow[h\downarrow 0]{} 0$.} Consequently, the synthesis process is in principle \emph{one-shot}; iterative correction procedures involving steps such as first a selection of the discretization interval, followed by the verification of whether the ensuing error satisfies a given threshold, and retuning the discretization interval if the threshold is not attained, etc., are entirely unnecessary. For the same reason, there is no utility of the standard log-log plots of the error in our setting.
	\item \label{prop:error} It is possible to pick three parameters mentioned in \ref{prop:tools} a priori in order to ensure that the uniform error between the optimal feedback and the approximated one stays \emph{below the machine precision of floating point arithmetic}. Of course, the resulting computational burden may be difficult to achieve with low-fidelity hardware, but it is not so with the aid of sufficiently rich computational resources.
\end{enumerate}
\vspace{-1mm}

\subsection*{Organization}

This article unfolds as follows. The primary problem that we address in this article is formulated in \S \eqref{sec:prob_and_main_result}. The mathematical background on the quasi-interpolation engine we employ here is presented in \S\eqref{appendix_app_app_theory}. The main contributions of this article are presented in \S\eqref{sec:main_results}. Our main result consists of two parts: the Lipschtiz extension and interpolation algorithm and the closed-loop stability results under the proposed approximation scheme. In \S \eqref{sec:linear_case} we focus on the linear MPC regime and develop a specialized version of the more general theory presented in \S\eqref{sec:prob_and_main_result} and \S\eqref{sec:main_results}. Four numerical examples are presented in \S \eqref{sec:num_exp} to illustrate the effectiveness of the proposed algorithm. The Appendix \S \eqref{sec:appenb} contains preliminaries on input-to-state stability results.

\vspace{-2mm}

\subsection*{Notation}
Our time indices are denoted by \([m;n] \Let \left\{m,m+1,\ldots,n\right\}\), where \(m,n\in \N \Let \aset[]{1,2,\ldots}\) and \(m \le n\). We let \(\Nz\Let \Nzr\) denote the set of natural numbers and \(\Z\) denote the integers. The vector space \(\R[d]\) is assumed to be equipped with standard inner product \(\inprod{v}{v'}\Let \sum_{j=1}^d v_j v'_j\) for every \(v,v' \in \R[d]\). Let \(X\) be an arbitrary subset of \(\Rbb^d\); by \(\intr X\) we denote the interior of \(X\), and \(\partial X\) denotes the boundary of \(X\). For any \(A, B \subset \Rbb^d\) we define \(A \oplus B \Let \aset[]{ a+b \suchthat a \in A,\,b\in B}\). Given a set \(\mathcal{A} \subset \Rbb^d\) the point-to-set distance from any point \(p_0 \in \Rbb^d\) to \(\mathcal{A}\) is denoted by \(d\bigl(p_0,\mathcal{A}\bigr) \Let \inf_{p_1 \in \mathcal{A}}\norm{p_0-p_1}\). We denote the uniform function norm via the notation \(\unifnorm{\cdot}\); more precisely, for a real-valued bounded function \(f(\cdot)\) defined on a set \(S\), it is given by \(\unifnorm{f(\cdot)} \Let \sup_{x \in S}|f(x)|\). For vectors residing in some finite dimensional vector space \(\Rbb^d\) we employ the notation \(\norm{\cdot}_{\infty}\) to denote the usual uniform vector norm. For \(\ell \in \N\) and for \(p \in \lcrc{1}{+\infty}\), the \(\ell\)-dimensional closed ball centered at \(x\) and of radius \(\eta>0\) with respect to the \(p\)-norm (vector) is denoted by \(\Ball^{\ell}_{p}[x, \eta]\).  For \(X\) and \(Y\) open subsets of Euclidean spaces, the set of \(r\)-times continuously differentiable functions from \(X\) to \(Y\) is denoted by \(\mathcal{C}^r (X;Y).\) The Schwartz space of rapidly decaying \(\Rbb\)-valued functions \cite[Chapter 2]{ref:mazyabook} on \(\Rbb^d\) is denoted by \(\mathcal{S}(\Rbb^d)\).

\section{Problem formulation}\label{sec:prob_and_main_result}

The fundamental object of interest in this article is a discrete-time autonomous (possibly nonlinear) control system given by the recursion 
\begin{equation}
    \label{eq:system}
    \st_{t+1} = \field(\st_t,\ut_t,\dist_t),\quad \st_0\text{ given},\quad t \in \Nz,
\end{equation}
with the following data:
\begin{enumerate}[label=\textup{(1-\alph*)}, leftmargin=*, widest=b, align=left]
	\item \label{eq:system-st-con-dist}\(\st_{t} \in \Rbb^{d}\), \(\con_t \in \Rbb^{\dimcon}\), and \(\dist_t \in \Rbb^p\) are, respectively, the vectors of the states, the control actions, and the uncertainty elements at time \(t\);
	\item  \label{eq:system-dynamics} the `vector field' \( \Rbb^d \times \Rbb^{\dimcon} \times \Rbb^p \ni (\dummyx,\dummyu,\dummyw ) \mapsto \field( \dummyx,\dummyu,\dummyw ) \in \Rbb^d\) is continuous and \(\field(0,0,0)=0\);
	\item \label{eq:constraint-sets-st-ut} the system \eqref{eq:system} is subjected to the state and control action constraints 
		\[
			\st_t \in \Mbb \text{ and } \ut_t \in \Ubb \quad\text{for all }t \in \Nz,
		\]
		where \(\Mbb\) is a nonempty closed subset of \(\Rbb^d\), \(\Ubb\) is a nonempty compact subset of \(\Rbb^{\dimcon}\), each containing the respective origin in its interior; 
	\item \label{eq:constraint-sets-wt} the uncertainty elements in the system, captured by \(\dist_t\) at each time \(t\), is assumed to be bounded, i.e., 
		\[
			\dist_t \in \Wbb \quad \text{for all }t \in \Nz,
		\]
		where \(\Wbb\subset \Rbb^p\) is a compact set containing the element \(0 \in \Rbb^p\) in its interior.
\end{enumerate}
As key ingredients of the MPC strategy, we assume that the following data are given to us:
\begin{enumerate}[label=\textup{(1-\alph*)}, leftmargin=*, widest=b, align=left, start=5]
\item \label{eq:system:horizon} a time horizon \(\horizon \in \N\);
\item \label{eq:system:costs} a \emph{cost-per-stage} function
		\(
			\Rbb^d \times \Rbb^{\dimcon} \ni (\dummyx,\dummyu) \mapsto \cost(\dummyx,\dummyu) \in \lcro{0}{+\infty},
		\)
		and a \emph{final-stage cost} function
		\(
			\Rbb^d \ni \xi \mapsto \fcost(\xi)\in \lcro{0}{+\infty},
		\)
		satisfying \(\cost(0,0)=0\) and \(\fcost(0)=0\);
\item \label{eq:system:terminal_set} a specified terminal set \(\admfinst\), which is compact and contains the origin in its interior, and, 
\item \label{eq:system:policies} a class of admissible control policies \(\policies\) consisting of a sequence \(\policy\Let (\policy_t)_{t=0}^{\horizon-1}\) of measurable maps such that \(\pi_i: \Mbb \lra \Ubb\) for each \(i\) and we write \(\dummyu_t = \pi_t(\dummyx_t)\).
\end{enumerate}
\vspace{1mm}
Given the preceding ingredients, the \embf{baseline robust optimal control problem} underlying the MPC strategy for the system \eqref{eq:system} and its accompanying data \ref{eq:system-st-con-dist}-\ref{eq:system:policies} is given by
\begin{equation}
	\label{eq:baseline robust MPC}
	\begin{aligned}
		& \inf_{\policy(\cdot)} \sup_{W} && \sum_{t=0}^{\horizon-1} \cost(\dummyx_t, \dummyu_t) + \fcost(\dummyx_{\horizon})\\
		& \sbjto && \begin{cases}
			\dummyx_{t+1} = \field(\dummyx_t, \dummyu_t, \dummyw_t),\,\dummyx_0 = \xz,\\
		\dummyx_t\in\Mbb,\,\dummyx_\horizon\in\admfinst, \text{ and }\dummyu_t\in\admact\\
			\quad\text{for all }(\dummyw_t,t) \in\Wbb \times \timestamp{0}{\horizon-1},\\
			\dummyu_t = \policy_t(\dummyx_t),\,
			\policy(\cdot)\in\policies, \\ W \Let (\dummyw_0, \ldots, \dummyw_{\horizon-1}).
		\end{cases}
	\end{aligned}
\end{equation}
The `measured state' \(\xz\in\Mbb\) enters the minmax problem \eqref{eq:baseline robust MPC} as a parameter. A \emph{solution} to \eqref{eq:baseline robust MPC} is an optimal policy
\[
	\opt{\policy} (\cdot) \Let (\opt\policy_t(\cdot))_{t=0}^{\horizon-1}
\]
and by construction, it respects the state and control action constraints irrespective of the admissible uncertainties. It is well-known that for optimal control problems in the presence of uncertainties, control policies are the correct objects to be optimized (see the discussions in \cite{ref:May-16}), accordingly, our formulation of the baseline robust optimal control problem \eqref{eq:baseline robust MPC} underlying the MPC problem features the outer minimization over a class of policies. Observe that since \(\xz\) is a parameter in \eqref{eq:baseline robust MPC}, if the solution \(\opt\policy(\cdot)\) of \eqref{eq:baseline robust MPC} is unique, then the optimal policy \(\opt\policy(\cdot)\) is a \emph{mapping} of \(\xz\). In particular, if \(X_{\horizon}\) is the set of feasible initial states for which the MPC problem \eqref{eq:baseline robust MPC} admits a solution, then \(\opt{\policy}_0:X_N\lra\admact\) is a \emph{feedback}.\footnote{In the absence of uniqueness, one gets a set-valued map instead of a feedback map.} The robust MPC algorithm proceeds by measuring the states \(\st_t\) of \eqref{eq:system} at time \(t\), setting \(\xz = \st_t\) in \eqref{eq:baseline robust MPC}, solving \eqref{eq:baseline robust MPC} to obtain an optimal policy \(\opt{\policy}(\cdot)\), setting \(\con_t \Let \opt{\policy}_0(\xz)\) in \eqref{eq:system}, incrementing time to \(t+1\), and repeating the preceding steps.

\subsection{From the baseline MPC to the approximation-ready MPC: motivation and formulation}\label{subsec:BLMPC_to_ARMPC}
In the context of explicit MPC, our primary focus is on synthesizing a tight \emph{approximation} of the first entry \(\opt{\policy}_0(\cdot)\) of the optimal policy. We measure tightness with respect to the \emph{uniform norm}, and the designer is permitted to specify the threshold of tightness, say \(\eps > 0\), before the synthesis procedure. Accordingly, if \(\pi_0^{\dagger}(\cdot)\) is an approximation of \(\opt{\policy}_0(\cdot)\), then we stipulate that 
\begin{align}
	\unifnorm{\opt{\policy}_0(\cdot) - \pi_0^{\dagger}(\cdot)} \le \eps.
\end{align}
In other words, we have \(\|\opt{\policy}_0(y)-\pi_0^{\dagger}(y)\| \le \eps\) for all \(y\in X_N\).

Such an approximation procedure (see \S\eqref{appendix_app_app_theory} for more details on the procedure and the corresponding estimates) generates an error that enters the system in the form of uncertainty in the control actions. Naturally, one must \emph{accommodate these uncertainties at the design stage} in order to ensure, at least, recursive feasibility. To account for this uncertainty due to the approximation error, we are faced with solving a robust optimal control problem that has several common features with \eqref{eq:baseline robust MPC} at the level of cost and constraint specifications, but differs at the level of the dynamics. With this motivation, we define the discrete-time controlled system where the approximation noise appears as an uncertainty:
\begin{align}\label{eq:noisy-system}
    \st_{t+1} =
	 \pfield \bigl(\st_t, \ut_t, (w_t,v_t)\bigr),\quad x_0\text{ given},\quad t \in \Nz,
\end{align}
where 
\begin{enumerate}[label=\textup{(4-\alph*)}, leftmargin=*, widest=b, align=left]
	\item \label{eq:noisy-data1_1} the right-hand side of \eqref{eq:noisy-system} is the mapping 
		\begin{multline*}
		    \Mbb \times \Ubb \times \Wbb \times \Vbb \ni \bigl(\dummyx,\dummyu,(\dummyw,\zeta) \bigr) \mapsto
			\pfield\bigl(\dummyx,\dummyu,(\dummyw,\zeta) \bigr) \Let \field(\dummyx, \dummyu+\zeta, \dummyw) \in \R[\dimsp],\nn
		\end{multline*}
		which is continuous because \(\field\) is continuous in view of \eqref{eq:system-dynamics}, and the uniform approximation error margin \(\eps > 0\) has been pre-specified;\footnote{Recall that this error margin \(\eps > 0\) is the choice of the designer.}
	\item \label{eq:noisy-data1} the data \eqref{eq:system-st-con-dist}--\eqref{eq:constraint-sets-wt} carry over to \eqref{eq:noisy-system} with \(\pfield\) in place of \(\field\) in \eqref{eq:system-dynamics} modulo obvious changes;
	\item \label{eq:noisy_baseline_data} the data \eqref{eq:system:horizon}--\eqref{eq:system:policies} from the baseline MPC problem \eqref{eq:baseline robust MPC} are satisfied. 
\end{enumerate}
With the preceding ingredients, our \embf{approximation-ready robust optimal control problem} for the synthesis of receding horizon control is given by
\begin{equation}
	\label{eq:approx-ready robust MPC}
	\begin{aligned}
		& \inf_{\policy(\cdot)} \sup_{W} && \sum_{t=0}^{\horizon-1} \cost(\dummyx_t, \dummyu_t) + \fcost(\dummyx_{\horizon})\\
		& \sbjto && \begin{cases}
			\dummyx_{t+1} = \pfield\bigl(\dummyx_t, \dummyu_t, (\dummyw_t, \zeta_t)\bigr),\dummyx_0 = \xz,\\
	\dummyx_t\in\Mbb,\,\dummyx_\horizon\in\admfinst,\text{ and }\dummyu_t+\dummyv_t\in\admact\\
			\quad\text{for all }(\dummyw_t, \zeta_t,t)\in\Wbb\times\Vbb \times \timestamp{0}{\horizon-1},\\
			\dummyu_t = \policy_t(\dummyx_t),\,
			\policy(\cdot)\in\policies, \\ W \Let \bigl((\dummyw_0, \zeta_0), \ldots, (\dummyw_{\horizon-1}, \zeta_{\horizon-1})\bigr).
		\end{cases}
	\end{aligned}
\end{equation}
Overloading our notation a little, we continue to denote the set of feasible initial states of the optimal control problem \eqref{eq:approx-ready robust MPC} by \(X_N\). We enforce the following assumption:
\begin{assumption}
	\label{a:approx-ready exun}
	The set \(\fset\) corresponding to problem \eqref{eq:approx-ready robust MPC} is nonempty and the problem \eqref{eq:approx-ready robust MPC} admits a unique solution \(\opt{\policy}(\cdot) = (\opt{\mu}_t(\cdot))_{t=0}^{\horizon-1}\) for each \(\xz \in \fset\).
\end{assumption}
\begin{remark}[Motivation, robust formulation and uniform error]
\label{r:robust and uniform}
The approximation-ready robust optimal control problem \eqref{eq:approx-ready robust MPC} relates to the system \eqref{eq:system} in the following way: the fictitious small noise (\(v_t\)) in \eqref{eq:noisy-system} accounts for the uncertainty (noise) in the control actions that enter due to our approximation procedure.
In our results, the approximation margin \(\eps\) is the choice of the designer, and this margin appears in the definition of the mapping \(\pfield\) --- its domain involves the set \(\Vbb\). The problem \eqref{eq:approx-ready robust MPC} thereby accounts for all the uncertainties that could have entered in \eqref{eq:system} by bootstrapping the noisy term at the \emph{synthesis stage}. See also Remark \ref{rem:robsut_approach}. 

\end{remark}

Of course, the first element \(\upopt(\cdot)\) of the policy solving \eqref{eq:approx-ready robust MPC} is of relevance in the context of MPC. Our approximation procedure, to be described in the sequel, produces an approximate feedback \(\mutrunc(\cdot)\) in place of \(\upopt(\cdot)\) that satisfies
\begin{equation}
	\label{eq:policy bound}
	\unifnorm{\mutrunc(\cdot) - \upopt(\cdot)} \le \eps.
\end{equation}
Equipped with our approximate feedback \(\mutrunc(\cdot)\), our \embf{approximate explicit MPC strategy} for \eqref{eq:system} is encoded by the following two steps:
\begin{itemize}[label=\(\circ\)]
	\item measure the states \(\st_t\) at time \(t\),
	\item apply \(\con_t = \mutrunc(\st_t)\), increment \(t\) to \(t+1\), and repeat.
\end{itemize}
Since the uncertainty due to the approximation error \eqref{eq:policy bound} has been accounted for in \eqref{eq:approx-ready robust MPC}, employing the control actions \(\con_t = \mutrunc(\st_t)\) for each \(t\) in \eqref{eq:system} ensures that all the given constraints are satisfied and recursive feasibility is guaranteed. We refer the readers to the Assumption \ref{assump:stability_assumptions} and the Theorem \ref{thrm:stability_main_result} ahead in \S\ref{sec:main_tech_results} for closed-loop stability and recursive feasibility guarantees. The next section provides a quick overview of the chief approximation tool we employ in this manuscript.

\section{Review of quasi-interpolation}\label{appendix_app_app_theory}
We provide a summary of and a few relevant results on a particular class of quasi-interpolation technique (known as \emph{approximate approximation}) which is our chief approximation tool. Let \(d \in \N\), \(m\in \Z^d\), \(h>0\) be the discretization parameter, \(\Dd>0\), and \(\aset[]{mh \suchthat m \in \Z^d}\) be a set of data points specified on a cardinal square grid of dimension \(d\). Let \(\psi(\cdot)\)  be a continuous ``generating" function. The quasi-interpolation scheme corresponding to a continuous function \(u:\Rbb^d \lra \Rbb\) is given by
\begin{align}\label{eq:gen_quasi_sdim}
\Rbb^d \ni x \mapsto \widehat{u}(x) \Let \frac{1}{\Dd^{d/2}}\sum_{m \in \Z^d}u(mh)\,\psi \left(\frac{x - mh}{h \sqrt{\Dd}}\right)
\end{align}
for all \(x \in \Rbb^d\). The generating function \(\psi:\Rbb^d \lra \Rbb\) in \eqref{eq:gen_quasi_sdim} belongs to the Schwartz class of functions that needs to satisfy the properties mentioned below: let \(\alpha \in \mathbb{N}^d\) denote a multi-index of length \([\alpha] \Let \alpha_1 + \ldots+ \alpha_d\), and we set \(z^{\alpha} \Let z_1^{\alpha_1} \cdots z_d^{\alpha_d}\) for \(z \in \Rbb^d\). The usual \(\alpha\)-order derivative of \(u(\cdot)\) is denoted by
\begin{align}
	\partial^{\alpha}u(x) \Let \frac{\partial^{[\alpha]}}{\partial x_1^{\alpha_1} \cdots \partial x_d^{\alpha_d}} u(x)\,\,\text{for }x \in \Rbb^d. \nn
\end{align}
The generating function \(\psi(\cdot)\) satisfies: 
\begin{itemize}[ leftmargin=*, widest=b, align=left]
\item \emph{the continuous moment condition of order \(M\)}, i.e.,
\begin{align}\label{eq:moment_condition}
	&\int_{\Rbb^d}\psi(y)\,\dd y=1\text{ and } \int_{\Rbb^d}y^{\alpha}\psi(y)\,\dd y=0 \nn\\& \,\,\text{for all}\,\, \alpha,\,1\le [\alpha] < M;
\end{align}
\item \emph{the decay condition}: For all \(\alpha \in \mathbb{N}^d\) satisfying \(0 \le [\alpha] \le \lfloor d/2 \rfloor + 1\), the function \(\psi(\cdot) \) is said to satisfy the decay condition of exponent \(K\) if there exist \(C_0>0\) and \(K>d\) such that 
\begin{align}\label{eq:decay_condition}
   \left(1+\|x\|\right)^K  |\partial^{\alpha} \psi(x)| \le C_0 \quad\text{for}\,\, x\in \Rbb^d.
\end{align}
\end{itemize}

\begin{table}[htbp]
\centering
\begin{tabular}{lc}
\toprule
Generating function \(\genfn(x)\) & Order (\(M\))  \\
    \midrule
 \(\genfn_1(x)=\frac{1}{\sqrt{\pi}}\epower{-x^2}\) \vspace{2mm} & \(2\)  \\ 
\(\genfn_2(x)=\frac{1}{\sqrt{\pi}}\left(\frac{15}{8}-\frac{5}{2}x^2+\frac{x^4}{4}\right)\epower{-x^2}\) \vspace{2mm} & \(6\) \\
\(\genfn_3(x)=\frac{1}{\sqrt{\pi}}\bigl(\frac{315}{128}-\frac{105}{16}x^2+\frac{63}{10}x^4-\frac{3}{4}x^6\) & \(10\) \\
\hspace{8mm}\(+\frac{1}{24}x^8\bigr)\epower{-x^2}\) \vspace{2mm}& \\
\(\genfn_4(x) = \sqrt{\frac{\mathrm{e}}{\pi}}\epower{-x^2}\cos \sqrt{2}x\) \vspace{2mm} & \(4\) \\ 
\(\genfn_5(x) = \frac{1}{\pi}\text{sech } x\) & \(2\) \\
        \bottomrule
    \end{tabular}
    \caption{The functions \(\R[]\ni x \mapsto \genfn_i(x), \, i \in \aset{2,3}\), are the higher order \emph{generalized Gaussian} or the \emph{Laguerre Gaussian}; \(\R[]\ni x \mapsto \genfn_4(x)\) is a \emph{trigonometric Gaussian}, and \(\R[]\ni x \mapsto \genfn_5(x)\) is the \emph{hyperbolic secant} \cite[Chapter 3]{ref:mazyabook}.}
  \label{tab:basis_specifications}
\end{table}
Table \ref{tab:basis_specifications} records a few candidate one-dimensional generating functions. Now we state the key estimate that we employ in this article in the context of Lipschitz continuous policies: 
 \begin{theorem}{\cite[Theorem 2.25]{ref:mazyabook}}
 \label{thrm:Holder-Lipschitz-estimate}
Consider a Lipschitz continuous function \(u:\Rbb^d \lra \Rbb\) of Lipschitz rank \(L_0\), i.e., \(u(\cdot)\) satisfies the inequality \(\|u(x+y)-u(x)\| \le L_0 \|y\|\) for all \(x,y \in \Rbb^d\). Let \(h>0\) and suppose that \(\aset[]{mh \suchthat m \in \Z^d} \subset \Rbb^d\) be the set of data sites for \(u(\cdot)\). In addition, suppose that the generating function \(\psi(\cdot)\) satisfies the moment condition \eqref{eq:moment_condition} of order \(M\), and the decay condition \eqref{eq:decay_condition} with exponent \(K\), and \(\mathcal{F} \psi(0)=1\), \(\mathcal{F}\) being the Fourier transform operator on \(\Rbb^d\). Then 
 \begin{align}\label{eq:Holder-Lip_estimate}
    \unifnorm{\widehat{u}(\cdot)- u(\cdot)} \le C_{\gamma}L_0 h \sqrt{\Dd}+\Delta_0(\psi,\Dd),
\end{align}
where \(\Delta_0(\psi,\Dd) \Let \mathcal{E}_0(\psi,\Dd)\unifnorm{u(\cdot)}\) is the saturation error, \(C_{\gamma} \Let M \cdot \Gamma (M)/\Gamma(M+2)\) is a constant, and the term
\(\mathcal{E}_0(\psi,\Dd)\) is given by 
\begin{align}\label{eq:saturation_decay_term}
    \mathcal{E}_0(\psi,\Dd)(\cdot) \Let \sup_{x \in \Rbb^d}\sum_{\nu \in \Z^d\setminus \{0\}}\mathcal{F} \psi(\sqrt{\Dd}\nu)\epower{2\pi i \inprod{x}{\nu}}.
\end{align}
\end{theorem}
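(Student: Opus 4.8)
The plan is to fix an arbitrary \(x \in \Rbb^d\), estimate the pointwise error \(\bigl|\widehat{u}(x) - u(x)\bigr|\), and then take the supremum over \(x\) at the very end. The organizing device is to split the error into a \emph{consistency} contribution and a \emph{saturation} contribution by inserting the partition-of-unity defect \(\Theta_h(x) \Let \mathcal{D}^{-d/2}\sum_{m\in\Z^d}\psi\bigl(\tfrac{x-mh}{h\sqrt{\mathcal{D}}}\bigr)\). Concretely, I would use the algebraic identity
\[
\widehat{u}(x) - u(x) = \mathcal{D}^{-d/2}\sum_{m\in\Z^d}\bigl(u(mh)-u(x)\bigr)\psi\Bigl(\tfrac{x-mh}{h\sqrt{\mathcal{D}}}\Bigr) + u(x)\bigl(\Theta_h(x)-1\bigr),
\]
which is legitimate because the decay condition \eqref{eq:decay_condition} (with \(K>d\)) and the boundedness of \(u\) on the grid make all the series converge absolutely. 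The first term will produce the \(C_\gamma L_0 h\sqrt{\mathcal{D}}\) piece, and the second will produce the saturation term \(\Delta_0(\psi,\mathcal{D})\).

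For the saturation contribution I would apply the Poisson summation formula to \(\Theta_h\). The hypotheses that \(\psi\) lies in the Schwartz class and satisfies \eqref{eq:decay_condition} guarantee rapid decay of both \(\psi\) and \(\mathcal{F}\psi\), so the lattice sum and its Fourier-dual sum converge absolutely and Poisson summation applies rigorously. It yields
\[
\Theta_h(x) = \sum_{\nu\in\Z^d}\mathcal{F}\psi\bigl(\sqrt{\mathcal{D}}\,\nu\bigr)\,e^{2\pi i\inprod{x/h}{\nu}} .
\]
The \(\nu=0\) term equals \(\mathcal{F}\psi(0)=1\) by hypothesis and cancels the \(-1\), leaving \(\Theta_h(x)-1=\sum_{\nu\neq 0}\mathcal{F}\psi(\sqrt{\mathcal{D}}\nu)\,e^{2\pi i\inprod{x/h}{\nu}}\). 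Since the rescaling \(x\mapsto x/h\) does not alter the supremum of a lattice-periodic function, taking \(\sup_x\) reproduces exactly \(\mathcal{E}_0(\psi,\mathcal{D})\) of \eqref{eq:saturation_decay_term}, and hence \(|u(x)|\,|\Theta_h(x)-1|\le \unifnorm{u(\cdot)}\,\mathcal{E}_0(\psi,\mathcal{D}) = \Delta_0(\psi,\mathcal{D})\).

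The consistency term is where the real work lies. The naive route bounds \(|u(mh)-u(x)|\le L_0\|x-mh\|\) and then, after the substitution \(y=\tfrac{x-mh}{h\sqrt{\mathcal{D}}}\), recognizes \(\mathcal{D}^{-d/2}\sum_m\|x-mh\|\,\bigl|\psi(\tfrac{x-mh}{h\sqrt{\mathcal{D}}})\bigr|\) as \(h\sqrt{\mathcal{D}}\) times a lattice sum of \(\|y\|\,|\psi(y)|\); a periodization/Poisson argument (again legitimized by the decay hypotheses on \(\psi\)) shows this is comparable to \(h\sqrt{\mathcal{D}}\int_{\Rbb^d}\|y\|\,|\psi(y)|\,\dd y\), which already gives the correct \(h\sqrt{\mathcal{D}}\) order. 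The obstacle is that this crude constant \(\int_{\Rbb^d}\|y\|\,|\psi(y)|\,\dd y\) need not match the stated \(C_\gamma = M\,\Gamma(M)/\Gamma(M+2)\) and in fact exceeds it (for the Gaussian \(\psi_1\), \(\int|y|\,|\psi_1(y)|\,\dd y = \pi^{-1/2}>\tfrac13 = C_\gamma\)). Recovering the sharper \(C_\gamma\) forces me to exploit the \emph{vanishing moments} up to order \(M\) in \eqref{eq:moment_condition} rather than Lipschitz continuity alone: I would pass to the integral (convolution) representation of the consistency error, subtract the part of a first-order expansion of \(u\) that the scheme reproduces exactly, and carry the rank \(L_0\) through a Taylor-type remainder in integral form, handling the mere Lipschitz (rather than \(\mathcal{C}^1\)) regularity via Rademacher's theorem or a mollification so that the remainder is meaningful. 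The Beta-function identity \(C_\gamma = M\int_0^1 t^{M-1}(1-t)\,\dd t = M\,\Gamma(M)\Gamma(2)/\Gamma(M+2)\) is precisely the signature of such an order-\(M\) remainder computation, and making this constant come out correctly — while controlling, uniformly in \(x\), the discrepancy between the discrete lattice sum and its continuous integral — is the crux. Adding the two contributions and taking \(\sup_x\) then delivers \eqref{eq:Holder-Lip_estimate}.
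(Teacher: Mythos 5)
First, a point of reference: the paper never proves this statement --- it is imported wholesale as \cite[Theorem 2.25]{ref:mazyabook} and used as a black box --- so your attempt can only be judged on its own merits. Your treatment of the saturation term is correct and is the standard argument: the split through the partition-of-unity defect \(\Theta_h\), Poisson summation (legitimate under the Schwartz/decay hypotheses), cancellation of the \(\nu=0\) term via \(\mathcal{F}\psi(0)=1\), and the remark that the rescaling \(x\mapsto x/h\) does not change the supremum of the lattice-periodic defect together produce exactly \(\Delta_0(\psi,\Dd)=\mathcal{E}_0(\psi,\Dd)\unifnorm{u(\cdot)}\). Your Lipschitz/periodization treatment of the consistency term also correctly yields the order \(h\sqrt{\Dd}\).

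The gap is the consistency constant, and it is worse than an unexecuted step: the repair you propose cannot be carried out, because the inequality with \(C_\gamma = M\,\Gamma(M)/\Gamma(M+2)=1/(M+1)\) is false for merely Lipschitz \(u\). Your own Gaussian computation is the seed of a counterexample, not an obstacle to be finessed by a sharper argument. Take \(d=1\), \(\psi(y)=\pi^{-1/2}\mathrm{e}^{-y^2}\) (so \(M=2\), \(C_\gamma=1/3\)), and \(u(y)=\min(|y|,1)\), Lipschitz of rank \(L_0=1\) with \(\unifnorm{u(\cdot)}=1\). At \(x=0\) every summand of \(\widehat{u}(0)\) is nonnegative, and \(\widehat{u}(0)-u(0)=\Dd^{-1/2}\sum_m |mh|\,\psi\bigl(m/\sqrt{\Dd}\bigr)\) is a Riemann sum for \(h\sqrt{\Dd}\int|y|\psi(y)\,\mathrm{d}y = h\sqrt{\Dd}/\sqrt{\pi}\approx 0.56\,h\sqrt{\Dd}\); concretely, with \(\Dd=2\) and \(h=10^{-3}\) one gets \(\widehat{u}(0)\approx 7.3\times 10^{-4}\), whereas the claimed bound is \(\tfrac13 h\sqrt{2}+\mathcal{E}_0(\psi,2)\cdot 1\approx 4.7\times 10^{-4}+5\times 10^{-9}\). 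Moment conditions cannot close this gap: the obstruction at a kink is the \emph{even} profile \(\|y\|\), against which the vanishing (odd, first-order) moments provide no cancellation, and a Lipschitz function admits no expansion beyond first order precisely at the kinks where the worst case lives --- Rademacher gives differentiability only almost everywhere, and mollification re-introduces the same constant. What is provable in the Lipschitz class is the estimate with constant essentially \(\int\|y\|\,|\psi(y)|\,\mathrm{d}y\), i.e., exactly what your ``naive'' route delivers; the printed \(C_\gamma\) (a Gamma-ratio of the kind that, in the cited book, pertains to smoother function classes) is unattainable. The honest conclusion from your observation that \(1/\sqrt{\pi}>C_\gamma\) should have been that the statement as printed cannot be proved, not that a finer moment-based argument recovers it.
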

\begin{remark}\label{rem:corollary_2_13_Maz}
The assumptions on \(\psi(\cdot)\) guarantee that for any preassigned \(\eps'>0\), we can choose \(\Dd_{\mathrm{min}}>0\) such that for any \(\Dd \ge \Dd_{\mathrm{min}}\), \(\mathcal{E}_0(\psi,\Dd)\le \frac{\eps'}{\bar{c}\unifnorm{u(\cdot)}}\) where \(\bar{c}>0\) is a constant which can be adjusted to make the total approximation error (after fixing a suitable \(h\)) bounded above by \(\eps'\). Readers are referred to \cite[Chapter 2, Corollary 2.13]{ref:mazyabook} for additional details.
\end{remark}
\begin{remark}\label{rem:truncated_sum}
Notice that the approximation formula \eqref{eq:gen_quasi_sdim} involves an infinite sum over a \(d\)-dimensional integer lattice to approximate the function \(u(\cdot)\) at a point \(x \in \Rbb^d\), and therefore, an infinite number of summands plays a part in constructing the approximant \(\widehat{u}(\cdot)\). However, this sum can be truncated in most applications due to the sharp decay property that the generating function \(\psi(\cdot)\) enjoys. Define
\begin{align}\label{eq:finite_grid}
    \finset_x(\Lambda) \Let \aset[\big]{mh \suchthat mh \in \Z^d} \cap \Ball_2^d[x,\Lambda],
\end{align}
and consider the finite-sum truncation \(u(\cdot)\):
\begin{align}\label{eq:truncated_quasi_sdim}
\Rbb^d \ni x \mapsto u^{\dagger}(x) \Let \Dd^{-d/2} \hspace{-3mm}   \sum_{mh \in \finset_x(\Lambda)} \hspace{-3mm}u(mh)\,\psi \left(\frac{x-mh}{h \sqrt{\Dd}}\right)
\end{align}
for all \(x\in \Rbb^d\). The approximant \eqref{eq:truncated_quasi_sdim} considers only the points \(mh\) inside the ball \(\Ball_2^{d}[x,\Lambda]\), and thus the grid \(\finset_x(\Lambda)\) in \eqref{eq:finite_grid} is finite. The difference between the approximant \(\widehat{u}(\cdot)\) and \(u^{\dagger}(\cdot)\) can then be bounded by
\begin{align}\label{eq:truncated_bound_1}
    \| u^{\dagger}(x)-\widehat{u}(x) \| \le \mathcal{B} \biggl( \frac{\sqrt{\Dd}h}{\Lambda} \biggr)^{K-d} \unifnorm{u(\cdot)}\;\;\text{for all }x \in \Rbb^d, 
\end{align}
where 
\begin{enumerate}[label=\textup{(R-\alph*)}, leftmargin=*, widest=b, align=left]
\item \label{trunc:rem:2} \(K>d\) is the decay exponent (see \eqref{eq:decay_condition}) of \(\psi(\cdot)\), and

\item \label{trunc:rem:1} \(\mathcal{B}\) is a constant that depends on \(d\), a conservative upper bound of \(\mathcal{B}\) is \(\frac{C_0 \hat{C}}{K-d}\), where \(C_0\) is the right-hand side of \eqref{eq:decay_condition} and \(\hat{C}>0\) is a constant that depends \(\psi(\cdot)\) (see \cite[\S 4.3.2, Page 50]{weisse2009global}, \cite[\S2.3.2, Page 35]{ref:mazyabook} for concrete expressions).
\end{enumerate}

An interesting case, one that we will employ below, is when \(\Lambda \Let \rzero h\) for some \(\rzero >0\). The error caused by the truncated approximant \(u^{\dagger}(\cdot)\) is comparable to the saturation error of \(\widehat{u}(\cdot)\) \cite[Chapter 2]{ref:mazyabook}, and the bound in \eqref{eq:truncated_bound_1} is independent of the parameter \(h\). Thus, \(u^{\dagger}(x)\) takes into account only the terms for which \(\|x/h - m\| \le \rzero\), which makes the number of summands in \eqref{eq:truncated_quasi_sdim} independent of the step-size \(h.\) 
\end{remark}


\section{Main result: Algorithm and theory}\label{sec:main_results} 

We state our main results in this section in the form of an algorithm and a closed-loop stability result under the approximate feedback policy for the system \eqref{eq:system} derived from \eqref{eq:approx-ready robust MPC}. The algorithm foreshadows the theoretical guarantees and is designed to ensure recursive feasibility despite the approximation errors that creep into the explicit (approximate) feedback constructed herein via quasi-interpolation. More specifically, the algorithm provides a systematic way to extend (whenever needed) the domain of the approximation-ready feedback policy \(\upopt(\cdot)\) and approximate it in the \emph{uniform} sense \emph{for all feasible} initial data. 




\subsection{Lipschitz extension algorithm}\label{sec:extension_algo} The algorithm can be segregated into four parts:
\label{s:extension process}


\subsubsection{Calculation of the approximation-ready policy at points}\label{sec:extension_algo_step_one}
Recall that the optimal feedback \(\upopt(\cdot)\) is generated via the approximation-ready robust MPC problem \eqref{eq:approx-ready robust MPC}. We limit our results to the case of policies that are Lipschitz continuous in the parameter \(\xz\); this is an explicit assumption of the main result --- Theorem \eqref{thrm:stability_main_result} ahead. Regularity of this nature is important for the interpolation technique we shall employ in the sequel. Any numerical algorithm to solve constrained minmax optimization problems can be applied to solve the optimization problem \eqref{eq:approx-ready robust MPC} and generate \(\upopt(\cdot)\) at grid points of a uniform cardinal grid on the state space \(\Mbb\).

\subsubsection{Extension to \(\Rbb^d\)}
We present a technique to extend the domain of the map \(\upopt(\cdot)\) to all \(\Rbb^d\) so that the estimates in \S\ref{appendix_app_app_theory} can be employed. The procedure also ensures good numerical fidelity and very high accuracy. We employ a simple extension algorithm which is along the same lines as \cite[Appendix 3]{ref:Loja_real}. To this end, define an \(h\)-net \cite{ref:hnets} of \(X_N\) given by \(\widehat{X}_N \Let \fset \cap \aset[]{mh \suchthat m \in \Z^d}\). Let us define the extended policy by 
\begin{align}\label{eq:extented_policy}
   \Rbb^d \ni x \mapsto \mu_{E}\as(x) \Let \inf_{y \in \widehat{X}_N} \bigl( \upopt(y) + L_0 \norm{x-y}_2 \bigr).
\end{align}
Note that this extension is performed component-wise for each of the \(\dimcon\) components of the function \(\mu_{E}\as(\cdot)\). 
\begin{proposition}\label{prop:extended_lipcon}
If the policy \(\upopt(\cdot)\) is Lipschitz continuous with Lipschitz rank \(L_0\), then the extended policy \(\mu_E\as:\Rbb^d \lra \Rbb^{\dimcon}\) is also Lipschitz with the same Lipschitz rank \(L_0\). 
\end{proposition}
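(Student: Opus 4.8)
The plan is to recognize \eqref{eq:extented_policy} as the classical McShane--Whitney extension formula and to verify the three defining properties of such an extension: finiteness of the infimum, agreement with \(\upopt(\cdot)\) on the net \(\widehat{X}_N\), and preservation of the Lipschitz rank. Since the construction \eqref{eq:extented_policy} is carried out separately on each of the \(\dimcon\) coordinate functions, it suffices to treat a single scalar-valued Lipschitz function \(f : \widehat{X}_N \lra \Rbb\) of rank \(L_0\) (a fixed coordinate of \(\upopt(\cdot)\)) and its extension \(F(x) \Let \inf_{y \in \widehat{X}_N}\bigl(f(y) + L_0\norm{x-y}_2\bigr)\); the vector statement then follows coordinate-by-coordinate.

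First I would establish that \(F\) is well defined, i.e., that the infimum is finite rather than \(-\infty\). Fixing any \(y_0 \in \widehat{X}_N\) (the net is nonempty, since \(\fset\) is nonempty by Assumption \ref{a:approx-ready exun} and contains the grid origin), the Lipschitz bound \(f(y_0) - f(y) \le L_0\norm{y-y_0}_2\) together with the triangle inequality \(\norm{y - y_0}_2 \le \norm{x-y}_2 + \norm{x - y_0}_2\) gives
\[
f(y) + L_0\norm{x-y}_2 \ge f(y_0) - L_0\norm{x-y_0}_2
\]
uniformly in \(y\), so the infimum is bounded below by \(f(y_0) - L_0\norm{x-y_0}_2 > -\infty\). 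Next I would check that \(F\) agrees with \(f\) on \(\widehat{X}_N\): for \(x \in \widehat{X}_N\) the admissible choice \(y = x\) yields \(F(x) \le f(x)\), while for every \(y \in \widehat{X}_N\) the inequality \(f(x) \le f(y) + L_0\norm{x-y}_2\) forces \(F(x) \ge f(x)\); hence \(F(x) = f(x)\).

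The Lipschitz preservation is the heart of the argument, and it is again a triangle-inequality computation: for arbitrary \(x_1, x_2 \in \Rbb^d\) and every \(y \in \widehat{X}_N\),
\[
f(y) + L_0\norm{x_1 - y}_2 \le f(y) + L_0\norm{x_2 - y}_2 + L_0\norm{x_1 - x_2}_2 .
\]
Taking the infimum over \(y\) on both sides gives \(F(x_1) \le F(x_2) + L_0\norm{x_1 - x_2}_2\), and interchanging \(x_1\) and \(x_2\) produces \(|F(x_1) - F(x_2)| \le L_0\norm{x_1-x_2}_2\). Reassembling the \(\dimcon\) coordinates then yields the claimed Lipschitz bound for \(\mu_E\as(\cdot)\) with the \emph{same} rank \(L_0\).

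I do not anticipate a genuine obstacle, as this is the McShane extension construction; the only two points deserving care are (i) the finiteness of the infimum, which requires the net to be nonempty, and (ii) the bookkeeping of the norm convention when passing from the per-coordinate bounds back to the vector-valued map. For the latter one must fix the output norm consistently with the rank \(L_0\) assumed for \(\upopt(\cdot)\) (e.g., the \(\ell^\infty\) norm on \(\Rbb^{\dimcon}\), under which the per-coordinate estimate transfers verbatim), so that the component-wise rank indeed coincides with the rank of the vector-valued policy.
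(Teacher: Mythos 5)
Your proof is correct and follows essentially the same route as the paper's: the key step in both is the triangle inequality \(f(y)+L_0\norm{x_1-y}_2 \le f(y)+L_0\norm{x_2-y}_2+L_0\norm{x_1-x_2}_2\), followed by taking the infimum over \(y\in\widehat{X}_N\) and interchanging \(x_1\) and \(x_2\). The additional checks you supply --- finiteness of the infimum and agreement with \(\upopt(\cdot)\) on the net (which the paper asserts without proof) --- are standard parts of the McShane--Whitney argument and only make the write-up more complete.
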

\begin{proof}
First note that the policies \(\upopt(\cdot)\) and \(\mu_E\as(\cdot)\) coincide on \(\widehat{X}_N\). Let \(x,x' \in \Rbb^d\), for any \(y \in \widehat{X}_N\) we have
\begin{align}
   \upopt(y)+L_0 \norm{x-y}_2 &\le \upopt(y)+L_0 \norm{x-x'+x'-y}_2 \nn \\& \le   \upopt(y)+L_0 \norm{x'-y}_2 + L_0 \norm{x-x'}_2. \nn
\end{align}
Taking infimum over \(y\in \widehat{X}_N\) on both sides gives us
\begin{align}
    \mu_E\as(x) \le \mu_E\as(x')+L_0\norm{x-x'}_2;
\end{align}
interchanging the roles of \(x\) and \(x'\) one see that \(\norm{\mu_E\as(x) - \mu_E\as(x')}_2 \le \norm{x-x'}_2.\)
The proof is complete.
\end{proof}
In the sequel, we shall overload notation and continue to label the policy \(\mu_E\as(\cdot)\) after extension to \(\Rbb^d\) as \(\upopt(\cdot)\) itself. Empirical evidence (to be given in Section \eqref{sec:num_exp}) suggests that this extension step may be skipped in certain numerical examples without transgressing the error bounds.

\subsubsection{Approximation}\label{sec:extension_algo_step_three}
The parent approximation engine is given, as in \eqref{eq:gen_quasi_sdim}, by the interpolation formula
\begin{align}\label{eq:parent_approx_policy}
\hspace{-2mm}\Rbb^d \ni x \mapsto   \appr{\mu}_0(\st) \Let \Dd^{-d/2}   \sum_{ m \in \Z^d}\hspace{-2mm}\upopt(mh)\,\psi \left(\frac{\st-mh}{h \sqrt{\Dd}}\right) 
\end{align}
for all \(x \in \Rbb^d\). We use the following truncated version of the parent quasi-interpolation scheme \eqref{eq:parent_approx_policy} (see Remark \eqref{rem:truncated_sum} for more details and our motivation behind the employment of \eqref{eq:approx_policy}) for the extended feedback policy \(\upopt(\cdot)\), given by:
\begin{align}\label{eq:approx_policy}
\Rbb^d \ni x \mapsto   \mutrunc(\st) \Let \Dd^{-d/2}   \sum_{mh \in \mathbb{F}_x(\rzero)}\upopt(mh)\,\psi \left(\frac{\st-mh}{h \sqrt{\Dd}}\right)
\end{align}
for all \(x \in \Rbb^d\), and \(\finset_x(\rzero)\) is as defined in \eqref{eq:finite_grid} (with \(\Lambda \Let \rzero h\)). 
The extended feedback policy \(\upopt(\cdot)\) is Lipschitz continuous by Proposition \ref{prop:extended_lipcon}, and is defined over the whole space \(\Rbb^d\). As a result, the following estimate holds (see Theorem \eqref{thrm:Holder-Lipschitz-estimate}): 
\begin{align}\label{eq:lipschitz_estimate_mpc}
	\unifnorm{\apprfb(\cdot) - \upopt(\cdot)} \le C_{\gamma}L_{0}h\sqrt{\Dd}+ \Delta_0(\psi,\Dd).
\end{align}
The term \(\Delta_0(\psi,\Dd)\) is the \emph{saturation error}.

Three quantities in \eqref{eq:approx_policy} --- \(h\), \(\Dd\), and \(\rzero\) --- need to be picked at this stage, \embf{depending on the prescribed error margin}. To this end, fix a desired uniform error margin \(\eps > 0\). We proceed to dominate the left-hand side of \eqref{eq:lipschitz_estimate_mpc} by \(\eps\) in \embf{three} steps:

\begin{itemize}[leftmargin=*, label=\(\triangleright\)]
	\item On the right hand side of \eqref{eq:lipschitz_estimate_mpc}, the second term \(\Delta_0( \psi, \Dd)\) --- the \emph{saturation error} --- depends on the shape parameter \(\Dd\), and can be reduced below \(\frac{\eps}{3}\) by increasing \(\Dd\). Notice that this term is \emph{independent} of \(h\), and therefore this step can be carried out by means of increasing \(\Dd\) alone.
	\item The first term on the right hand side of \eqref{eq:lipschitz_estimate_mpc} converges to zero, for every fixed \(\Dd\), as \(h\to 0\). Thus after fixing \(\Delta_0(\psi,\Dd)\) in the preceding step (which ensures \(\Delta_0(\psi,\Dd) \le \eps/3\)), we pick \(h\) such that the first term is dominated by \(\frac{\eps}{3}\).
	\item We pick \(\rzero>0\) such that the error between the truncated and the parent approximants (see \eqref{eq:truncated_bound_1}) is below \(\frac{\eps}{3}\).
\end{itemize}
The total \embf{uniform} error, consequently, stays within the preassigned bound \(\eps\); Theorem \ref{thrm:stability_main_result} ahead describes how to choose the parameters \(\rzero\), \(h\), and \(\Dd\).


\subsubsection{Restriction}\label{sec:extension_algo_step_four}
Finally, we restrict the approximated policy \(\mutrunc(\cdot)\) to the set \(\fset\). By construction \(\mutrunc(\cdot)\) satisfies \(\|\upopt(x)-\mutrunc(x)\|\le \eps\) for all \(x \in \fset\).


       
       
        
        
        

\subsection{Stability guarantees under the Lipschitz extension algorithm} \label{sec:main_tech_results}
Let us recall that the approximation-ready feedback policy for \eqref{eq:approx-ready robust MPC} is \(\upopt(\cdot)\) and the approximate feedback policy is \(\mutrunc(\cdot)\) (obtained from \(\upopt(\cdot)\) by following the steps in \secref{s:extension process}). Let us establish conditions for stability of \eqref{eq:system} under the approximate policy \(\mutrunc(\cdot)\). 
\begin{assumption}\label{assump:stability_assumptions}
For robust stability we need \(\pfield(\cdot), \fcost(\cdot), \admfinst\), and \(\cost(\cdot,\cdot)\) to satisfy following properties \cite{ref:MayFal-19}:  
\begin{itemize}[label=\(\circ\), leftmargin=*]

\item \label{eq:stability_prop1} For all \(\xi \in \admfinst\), there exists a feedback \(\dummyx \mapsto \dummyu(\dummyx) \Let \mu_F (\dummyx)  \in \Ubb\) such that 
	\[
		\pfield\bigl( \dummyx,\mu_F(\dummyx),(\dummyw,\dummyv) \bigr) \in \admfinst\quad\text{for every }\dummyx \in \admfinst, (\dummyw,\dummyv) \in \Wbb \times \Vbb.
	\]

\item\label{eq:stability_prop2} There exists a number \(b>0\) such that: \\
\(\fcost \circ \pfield \bigl( \dummyx,\mu_{F}(\dummyx),(\dummyw,\dummyv) \bigr) -\fcost(\dummyx) \le - \cost\bigl(\dummyx,\mu_{F}(\dummyx)\bigr)+b \,\,\text{for every}\,\,\dummyx \in \admfinst, (\dummyw,\dummyv) \in \Wbb \times \Vbb.\)

\item The terminal set \(\admfinst \subset \Mbb\) is compact and contains the origin in its interior, and there exist \(\mathcal{K}_{\infty}\) (See \cite[Definition 2.13]{ref:GruPan-17}) functions \(\alpha_1(\cdot), \alpha_2(\cdot)\) such that

\begin{itemize}[label=\(\triangleright\), leftmargin=*]

\item \(\cost(\xi,\dummyu) \ge \alpha_1 \bigl(|\xi|\bigr)\) for every \(\xi \in \fset\) and for every \(\dummyu \in \Ubb\),  \((\dummyw,\dummyv) \in \Wbb \times \Vbb\),

\item \(\fcost(\xi) \le \alpha_2 \bigl(|\xi|\bigr) \,\,\) for every \(\xi \in \admfinst\).

\end{itemize}

\end{itemize}

\end{assumption}
Under Assumption \eqref{assump:stability_assumptions}, it can be shown \cite[\S3, Assumption 3, and the discussion thereafter]{ref:MayFal-19} that  under the receding horizon policy \(\upopt(\cdot)\), the closed-loop system of \eqref{eq:approx-ready robust MPC} generated by the dynamical system \(\st_{t+1}= \pfield\bigl(\st_t, \upopt(\st_t), (\dist_t,v_t)\bigr)\) is robustly stable and the value function \(\valuefunc(\cdot)\) satisfies the following descent property for some \(b\in\R\):
\begin{align}
	\label{eq:value_descent}
	\valuefunc (\st_1) - \valuefunc (\st_0) \le - \cost \bigl(\st,\upopt(\st_0)\bigr)+b
\end{align} 
for every \(\st_0, \st_1 \in \fset\), the set of all feasible states for which \eqref{eq:approx-ready robust MPC} admits a solution. Figure \eqref{fig:stab_flow} explains the interplay between the optimization problems \eqref{eq:baseline robust MPC} and \eqref{eq:approx-ready robust MPC} and how Assumption \eqref{assump:stability_assumptions} comes into the picture.
\begin{figure}[ht]
\centering
\includegraphics[scale=0.6]{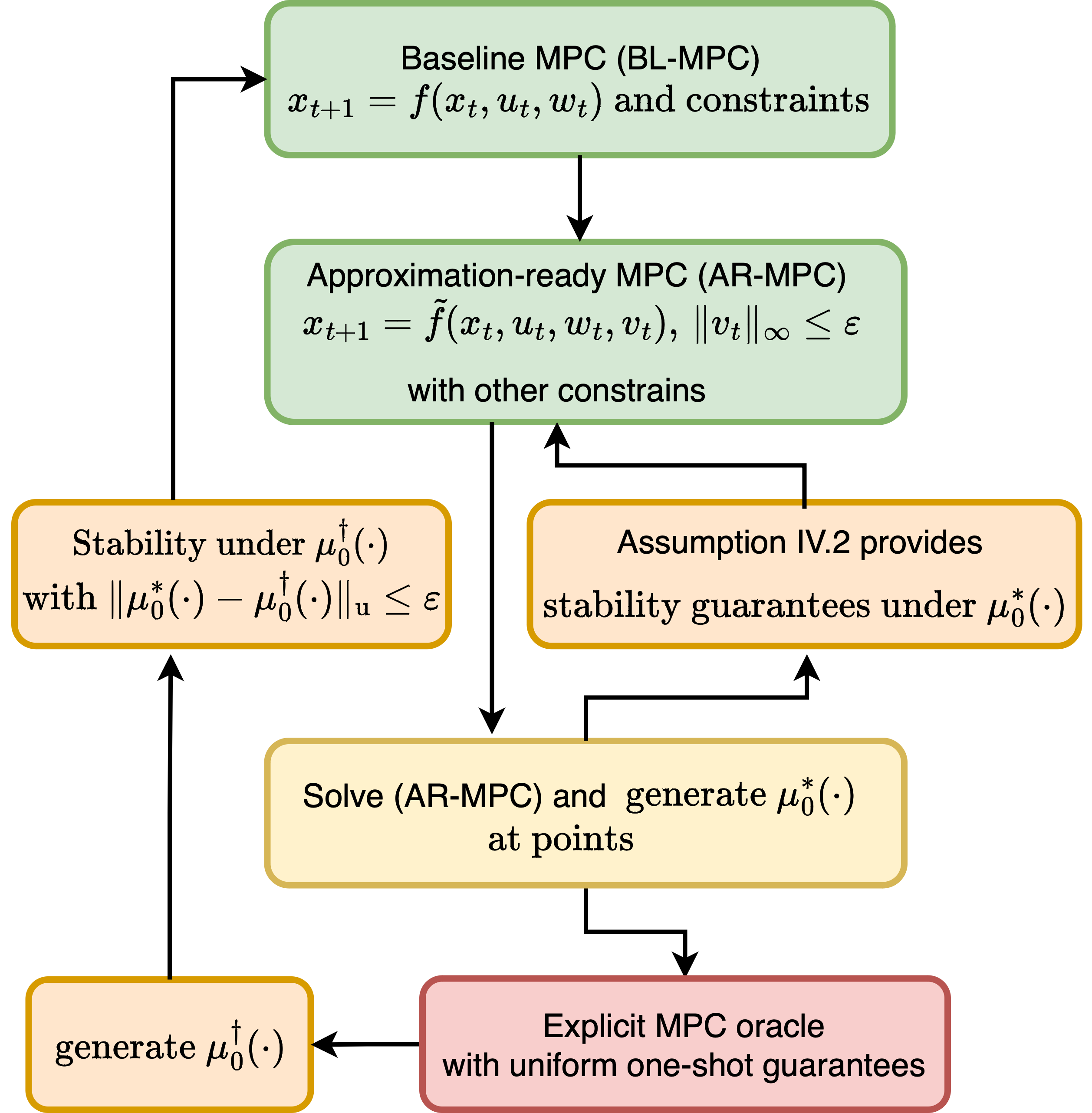}
\caption{A flowchart explaining the QuIFS algorithm.}\label{fig:stab_flow}
\end{figure} 
Against this backdrop, here is our key technical result concerning the approximate feedback policy \(\mutrunc(\cdot)\).
\begin{theorem}\label{thrm:stability_main_result}
Consider the constrained optimal control problem \eqref{eq:approx-ready robust MPC} along with its associated data \eqref{eq:noisy-data1_1}--\eqref{eq:noisy_baseline_data} and suppose that Assumption \eqref{a:approx-ready exun} holds. Let \(\upopt(\cdot)\) be the unique approximation-ready receding horizon policy. Assume that \(\fset \ni x \mapsto \upopt(x) \in \Ubb\) is Lipschitz continuous with Lipschitz constant \(L_{0}\). Then:
\begin{enumerate}[label=\textup{(\ref{thrm:stability_main_result}-\alph*)}, leftmargin=*, widest=b, align=left]
	\item \label{thrm:main:estimate} For every given \(\eps>0\), there exist a generating function \(\psi(\cdot) \in \mathcal{S}(\Rbb^d)\), a pair \((h,\Dd) \in \loro{0}{+\infty}^2\), and \(\rzero >0\), such that the approximate feedback map \(\fset \ni \st \mapsto \mutrunc(\st) \in \Ubb\) defined in \eqref{eq:approx_policy} is within a uniform error margin \(\varepsilon\) from \(\upopt(\cdot)\); to wit,
		\begin{equation}
		    	\|\upopt(x) - \mutrunc(x)\| \le \eps \quad \text{for all}\,x \in \fset.  \nn
		\end{equation}
	\item \label{thrm:main:stability} In addition, suppose that Assumption \eqref{assump:stability_assumptions} holds. Then under the approximate feedback map \(\fset \ni \st \mapsto \mutrunc(\st)\) constructed via Algorithm \eqref{alg:extension_algo}, the system \eqref{eq:system} is ISS-like stable in the sense of Definition \eqref{appen_b_Regional_Practical_ISS}. 
\end{enumerate}
\end{theorem}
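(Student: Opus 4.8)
The plan is to prove the two parts in turn. For part \ref{thrm:main:estimate} the strategy is purely an assembly of the two estimates already recorded in \S\ref{appendix_app_app_theory}, namely the Lipschitz quasi-interpolation bound \eqref{eq:Holder-Lip_estimate} of Theorem \ref{thrm:Holder-Lipschitz-estimate} and the truncation bound \eqref{eq:truncated_bound_1} of Remark \ref{rem:truncated_sum}. First I would fix a generating function \(\psi(\cdot)\) from Table \ref{tab:basis_specifications} (e.g.\ \(\genfn_1\)) so that the moment condition \eqref{eq:moment_condition}, the decay condition \eqref{eq:decay_condition} with exponent \(K>d\), and the normalization \(\mathcal{F}\psi(0)=1\) all hold. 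Writing \(\Lambda \Let \rzero h\) and interposing the parent approximant \(\apprfb(\cdot)\) of \eqref{eq:parent_approx_policy}, the triangle inequality gives, for every \(x \in \fset\),
\begin{align}
	\|\upopt(x) - \mutrunc(x)\| &\le \|\upopt(x) - \apprfb(x)\| + \|\apprfb(x) - \mutrunc(x)\| \nn\\
	&\le \underbrace{C_{\gamma} L_0 h \sqrt{\Dd}}_{\mathrm{(I)}} + \underbrace{\Delta_0(\psi, \Dd)}_{\mathrm{(II)}} + \underbrace{\mathcal{B}\Bigl(\tfrac{\sqrt{\Dd}}{\rzero}\Bigr)^{K-d}\unifnorm{\upopt(\cdot)}}_{\mathrm{(III)}}, \nn
\end{align}
where I use that the extended policy of Proposition \ref{prop:extended_lipcon} retains the Lipschitz rank \(L_0\), and that \(\upopt(\cdot)\) takes values in the compact set \(\Ubb\) so that \(\unifnorm{\upopt(\cdot)}\) is finite on the tube of grid points that actually enter \eqref{eq:approx_policy}.

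The selection of \((\psi, h, \Dd, \rzero)\) then dominates each summand by \(\eps/3\), and the essential structural observation is the \emph{decoupling} of the three terms: (II) depends only on \(\Dd\) (and not on \(h\) or \(\rzero\)), while (III) depends only on the ratio \(\sqrt{\Dd}/\rzero\). Concretely I would (i) invoke Remark \ref{rem:corollary_2_13_Maz} to choose \(\Dd\) large enough that \(\Delta_0(\psi,\Dd)\le \eps/3\); (ii) with \(\Dd\) now frozen, use the convergence of (I) to \(0\) as \(h \downarrow 0\) to pick \(h\) with \(\mathrm{(I)}\le \eps/3\); (iii) since \(K>d\), the factor \((\sqrt{\Dd}/\rzero)^{K-d}\to 0\) as \(\rzero \to +\infty\), so pick \(\rzero\) large with \(\mathrm{(III)}\le \eps/3\). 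The restriction step of \S\ref{sec:extension_algo_step_four} then yields \(\|\upopt(x)-\mutrunc(x)\|\le \eps\) for all \(x \in \fset\), which is \eqref{eq:policy bound}.

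For part \ref{thrm:main:stability} the key conceptual move is to reinterpret the approximation error as one of the admissible uncertainties that \eqref{eq:approx-ready robust MPC} was expressly constructed to absorb. Along the closed loop \(\st_{t+1}=\field(\st_t,\mutrunc(\st_t),\dist_t)\) I would define the fictitious control perturbation \(v_t \Let \mutrunc(\st_t)-\upopt(\st_t)\); part \ref{thrm:main:estimate} guarantees \(\|v_t\|\le \eps\), hence \(v_t \in \Vbb\). By the definition of \(\pfield\) in \ref{eq:noisy-data1_1} this gives \(\field(\st_t,\mutrunc(\st_t),\dist_t)=\pfield\bigl(\st_t,\upopt(\st_t),(\dist_t,v_t)\bigr)\), so the true closed-loop trajectory coincides with a trajectory of the robust surrogate \eqref{eq:approx-ready robust MPC} driven by the admissible pair \((\dist_t,v_t)\in\Wbb\times\Vbb\). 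Since \eqref{eq:approx-ready robust MPC} is feasible against \emph{every} such pair, recursive feasibility is inherited, and the value-function descent \eqref{eq:value_descent}---which holds robustly over all admissible \((\dist,v)\)---continues to hold along this trajectory.

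Finally I would promote \(\valuefunc(\cdot)\) to an ISS-Lyapunov function: the stage-cost lower bound \(\cost(\xi,\dummyu)\ge \alpha_1(|\xi|)\) and terminal-cost upper bound \(\fcost(\xi)\le\alpha_2(|\xi|)\) of Assumption \ref{assump:stability_assumptions}, together with the descent \eqref{eq:value_descent} with residual \(b\), furnish the lower/upper \(\mathcal{K}_\infty\) sandwich and the decrease-up-to-\(b\) condition required by the Regional Practical ISS result in the Appendix (\S\ref{sec:appenb}); invoking that result concludes ISS-like stability in the sense of Definition \ref{appen_b_Regional_Practical_ISS}. The main obstacle I anticipate is not the Lyapunov bookkeeping but the \emph{simultaneous} verification that \(v_t\in\Vbb\) is genuinely admissible at every step \emph{while} \(\st_t\) remains in \(\fset\), since a single state leaving \(\fset\) would invalidate the very definition of \(v_t=\mutrunc(\st_t)-\upopt(\st_t)\) at the next step. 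Closing this joint recursive-feasibility-and-error statement is exactly where the uniform (as opposed to pointwise or probabilistic) nature of the bound in part \ref{thrm:main:estimate} and the built-in robust margin \(\Vbb\) of \eqref{eq:approx-ready robust MPC} must be shown to do their work.
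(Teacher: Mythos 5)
Your proposal is correct and follows essentially the same route as the paper's proof: the same triangle-inequality split through the parent approximant \(\apprfb(\cdot)\) with each of the three terms dominated by \(\eps/3\) (choosing \(\Dd\) first, then \(h\), then \(\rzero\)), and the same reinterpretation of the approximation error \(v_t = \mutrunc(\st_t)-\upopt(\st_t)\in\Vbb\) so that the true closed loop becomes a trajectory of the robust surrogate \eqref{eq:approx-ready robust MPC}, to which the value-function descent \eqref{eq:value_descent} and the regional ISS machinery of the Appendix apply. The only cosmetic differences are that the paper fixes \(h\) and \(\rzero\) by explicit formulas rather than by limiting arguments, and defers the Lyapunov bookkeeping and the joint recursive-feasibility argument you flag to the cited result \cite[\S3]{ref:MayFal-19}.
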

\begin{proof}
We begin by giving a proof of the first assertion. Fix \(\eps>0\), \(\psi(\cdot) \in \mathcal{S}(\Rbb^d)\) satisfying moment condition of order \(M\) and decay condition of order \(K>d\) with \(C_0\) as the upper bound in \eqref{eq:decay_condition}. The approximate feedback policy \(\mutrunc(\cdot)\) derived from the extended approximation-ready policy \(\upopt(\cdot)\) is given by the summation 
\begin{align}\label{eq:truncated_quasi_d_dim_proof}
x \mapsto     \mutrunc(x) \Let \Dd^{-d/2}   \sum_{\mathclap{\substack{mh \in \finset_x(\rzero)}}}\upopt(mh)\,\psi \left(\frac{x-mh}{h \sqrt{\Dd}}\right)
\end{align}
for \(x \in \Rbb^d\), and \(\finset_x(\rzero)\) as defined in \eqref{eq:finite_grid}. By assumption, the approximation-ready policy \(\upopt(\cdot)\) is Lipschitz continuous with Lipschitz rank \(L_0\) and so is the extended policy \(\upopt(\cdot)\) (we overload the notation here) with the same Lipschitz rank \(L_0\) (see Proposition \ref{prop:extended_lipcon} in \S\ref{sec:extension_algo}). Let \(\appr{\mu}_0(\cdot)\) be as given in \eqref{eq:parent_approx_policy}. Then, we have the estimate (see Theorem \eqref{thrm:Holder-Lipschitz-estimate})
\begin{align}\label{eq:lipschitz_estimate_mpc_proof}
	\unifnorm{\apprfb(\cdot) - \upopt(\cdot)} \le C_{\gamma}L_{0}h\sqrt{\Dd}+ \Delta_0(\psi,\Dd).
\end{align}
From \cite[Chapter 2, Corollary 2.13]{ref:mazyabook} it follows that for the preassigned \(\eps>0\), we can find \(\Dd_{\mathrm{min}}>0\) such that whenever \(\Dd \ge \Dd_{\mathrm{min}}\), we have
\begin{align}
	\label{epsilon_0_estimate}
	\mathcal{E}_0(\psi,\Dd) \le \frac{\eps}{3\unifnorm{\upopt(\cdot)}}.
\end{align}
We pick \(\Dd \ge \Dd_{\mathrm{min}}\), which ensures
\begin{align}
	\label{delta_0_estimate}
	\Delta_0(\psi,\Dd) \le \frac{\eps}{3}.
\end{align}
Now we fix
\begin{align}
	\label{h_estimate}
	h = \frac{\eps}{3C_{\gamma}L_{0}\sqrt{\Dd}},
\end{align}
	which leads to the first term on the right-hand side of \eqref{eq:lipschitz_estimate_mpc_proof} to be dominated by \(\frac{\eps}{3}\). Combining the estimate \eqref{delta_0_estimate} with \eqref{h_estimate}, from \eqref{eq:lipschitz_estimate_mpc_proof} we arrive at
\begin{equation}
	\label{proof:final_eps_estimate}
    \unifnorm{\upopt(\cdot)-\appr{\mu}_0(\cdot)} \le \frac{2\eps}{3}.
\end{equation}
Notice that the estimate \eqref{proof:final_eps_estimate} is valid after \(\upopt(\cdot)\) has been extended to \(\Rbb^d\). Let \(\mathcal{B}\) be a constant specific to \(\psi(\cdot)\) as given in \eqref{trunc:rem:1} of Remark \eqref{rem:truncated_sum}; in \eqref{eq:truncated_bound_1} with \(\Lambda \Let \rzero h\), we pick
\[ \rzero \Let \sqrt{\Dd}\biggl(\frac{\eps}{3\mathcal{B} \unifnorm{\upopt(\cdot)}}\biggr)^{1/(d-K)}.\]
Then \(\|\appr{\mu}_0(x) - \mutrunc(x)\| \le \frac{\eps}{3}\) for all \(x \in \Rbb^d\). Now restricting the domains of \(\upopt(\cdot)\), \(\appr{\mu}_0(\cdot)\), and \(\mutrunc(\cdot)\) to \(\fset\) while retaining the same notation for all of them, we see that
\begin{align}\label{pf:final_error_estimate}
   & \|\upopt(x)-\mutrunc(x)\|  \le \|\upopt(x)-\appr{\mu}_0(x)\| + \| \appr{\mu}_0(x) - \mutrunc(x)\| \nn\\& \le C_{\gamma}L_0h\sqrt{\Dd}+\Delta_0(\psi,\Dd)+\mathcal{B} \biggl(\frac{\sqrt{D}}{\rzero}\biggr)^{K-d}\unifnorm{\upopt(\cdot)} \nn \\& \le \frac{2\eps}{3}+\frac{\eps}{3}= \eps.
\end{align}
In summary, since \(\eps>0\) was preassigned and we picked \(\psi(\cdot) \in \mathcal{S}(\Rbb^d)\) and the triplet \((h,\Dd,\rzero) \in \loro{0}{+\infty}^3\) such that the estimate \eqref{pf:final_error_estimate} holds, the first assertion \eqref{thrm:main:estimate} stands established. 

We proceed to prove the second assertion \eqref{thrm:main:stability} concerning ISS-like stability of the closed-loop system corresponding to the system \eqref{eq:system} under the approximate feedback \(\mutrunc(\cdot)\). Under \(\mutrunc(\cdot)\), the closed-loop process is given by:\begin{align}\label{proof:closed_loop_1}
        x_{t+1} = \field \bigl(x_t,\mutrunc(x_t),w_t\bigr),
\end{align}
Recall that the (state-dependent) approximation noise is given by \(v_t \Let \mutrunc(x_t)- \upopt(x_t)\). Then from \eqref{proof:closed_loop_1} we have
\begin{align}\label{proof:closed_loop_2}
x_{t+1} &= \field \bigl(x_t,\mutrunc(x_t),w_t\bigr) \nn \\& = \field \bigl(x_t,\mutrunc(x_t)-\upopt(x_t)+\upopt(x_t),w_t\bigr) \nn \\
& = \field \bigl(x_t,v_t+\upopt(x_t),w_t\bigr) \nn \\&  = \pfield \bigl(x_t,\upopt(x_t),(w_t,v_t)\bigr),
\end{align}
where \(\pfield(\cdot)\) has been defined in \eqref{eq:noisy-system}. With the stability Assumption \eqref{assump:stability_assumptions} in place by hypothesis, for the problem \eqref{eq:approx-ready robust MPC} we have the following descent property concerning the value function (quoted in \eqref{eq:value_descent}): there exists \(b > 0\) such that the inequality
\begin{align}
    	\label{proof:value_ineq}
        \valuefunc \bigl(x_{t+1}\bigr) - \valuefunc \bigl(x_t \bigr) \le - \cost \bigl(x_t,\upopt(x_t)\bigr) + b.
\end{align}
holds. Consequently, the closed-loop system under \(\upopt(\cdot)\) of \eqref{eq:noisy-system}, i.e., the dynamics \eqref{proof:closed_loop_2}, is ISS-like stable in the sense of Definition \eqref{appen_b_Regional_Practical_ISS} and the ensuing optimal control problem \eqref{eq:approx-ready robust MPC} is recursively feasible; see \cite[\S3]{ref:MayFal-19}. This immediately proves ISS-like stability of the original controlled system \eqref{eq:system} under the approximate feedback policy \(\mutrunc(\cdot)\) (i.e., ISS-like stability of the system \eqref{proof:closed_loop_1}) in the sense of Definition \eqref{appen_b_Regional_Practical_ISS}, completing the proof.
\end{proof}
The entire procedure of extension and approximation described in \S\ref{sec:extension_algo} and \S\ref{sec:main_tech_results} is recorded in  Algorithm \eqref{alg:extension_algo}. 
\begin{algorithm2e}[!h]
\DontPrintSemicolon
\SetKwInOut{ini}{Initialize}
\SetKwInOut{giv}{Data}
\SetKwInOut{ext}{Extend}
\SetKwInOut{interpol}{Interpolation}
\giv{\(\upopt(\cdot)\) on \(\widehat{X}_N\)}
\ini{Lipschitz constant \(L_{0}\) of the policy \(\upopt(\cdot)\)}

\ext{Extend \(\upopt(\cdot)\) to whole \(\Rbb^d\) using \eqref{eq:extented_policy}}

\interpol{\(\circ\) Fix an error-margin \(\eps>0\); \\\(\circ\) Choose the tuple \\ \(\bigl(\psi(\cdot),h,\Dd,\rzero\bigr) \in \mathcal{S}(\Rbb^d) \times \loro{0}{+\infty}^3\); \\
		\(\circ\) Compute \(\mutrunc(\cdot)\) via \eqref{eq:approx_policy};\\\(\circ\) Restrict \(\mutrunc(\cdot)\) to \(\fset\).} 
\caption{Lipschitz extension and approximation}
\label{alg:extension_algo}
\end{algorithm2e}

\begin{remark}\label{rem:lip_con_estimation}
Notice that by assumption, the policy \(\upopt(\cdot)\) is Lipschitz with a known Lipschitz rank \(L_0\) and the theoretical guarantees of Theorem \ref{thrm:stability_main_result} employ \(L_0\). For implementation purposes \(L_0\) may have to be estimated via numerical techniques because, in general, an analytical expression of \(\upopt(\cdot)\), and consequently the value of \(L_0\), may not available. In our numerical study, we computed the supremum norm of the numerical gradient of \(\upopt(\cdot)\) at the uniformly spaced grid points and conservatively set \(L_0\) to be \(\widehat{L}_0 \Let 2 L_0\). Other techniques can also be employed to estimate \(L_0\) from data, e.g., as given in \cite{ref:hnets} and \cite{ref:wood1996estimation}.
\end{remark}

\begin{remark}\label{rem:x_n_h_nets}
	It is also important to note that we do not assume that the feasible set \(X_N\) is known. We do not need to calculate the feasible set for our algorithm: given a preassigned \(\eps>0\) our algorithm gives a step size \(h\), employing which we grid the state space and solve the approximation-ready robust optimal control problem \eqref{eq:approx-ready robust MPC} at each grid point. Technically speaking, we obtain an \(h\)-net \cite{ref:hnets} of \(X_N\) under the \(\infty\)-norm (`box' norm) by solving the MPC problem \eqref{eq:approx-ready robust MPC} on the uniform cardinal grid of side \(h\). Since \(h>0\) is small, we automatically get an \(h\)-approximate (in the \(\infty\)-norm) subset of \(X_N\) in this way.
\end{remark}

\subsection{Discussion}
\begin{remark}[On the robust approach]\label{rem:robsut_approach}
We reiterate that the technique of approximation (as opposed to exact evaluation) of control policies necessarily introduces uncertainties in the action variable during the operation of the underlying system. Accommodating such uncertainties at the synthesis stage naturally leads to the robust formulation of MPC irrespective of whether the original problem was nominal or robust MPC. In fact, even if stochastic modeling of uncertainties in the plant and/or measurements is considered and the resulting policies approximated by some means, still the synthesized policies must be robust with respect to the errors introduced by the approximated control policy in view of ensuring recursive feasibility. In other words, ensuring robustness in closed-loop with respect to uncertainties in the control actions is inevitable in the technique of approximation; the \emph{extent} of robustness can be \emph{prespecified} in our approach as explained above (and as pointed out in point \ref{prop:applicability_type} of \secref{sec:intro}). Among all possible types of approximation, we submit that the best choice is that of \emph{uniform} approximation; indeed, no other \(\lpL[p]\) for \(p\in \lcro{1}{+\infty}\) approximation error would guarantee boundedness of the uncertainties introduced in the control actions due to such errors, thereby rendering the robust formulation ineffective and compromising recursive feasibility. For the same reason, probabilistic guarantees of uniform approximation are insufficient by themselves, in ensuring recursive feasibility.
\end{remark}
\begin{remark}[Computational challenges]
	Of course, the computation of optimal policies in, e.g., \eqref{eq:approx-ready robust MPC} is a challenging problem. Over and above the exponential complexity introduced due to the uniform grid (pointed out in point \ref{prop:complexity} of \secref{sec:intro}), each point evaluation involves the numerical solution of a minmax problem. While the general case of nonlinear MPC offers little hope with regards to the indicated minmax computation at the present time, the linear analog (i.e., linear MPC, to be treated in \secref{sec:linear_case}) does indeed admit numerically tractable approaches in some of the most important cases. One of the early developments in this direction was reported in \cite{ref:BerBro-07}, hence the authors treated the case of the minmax problem with open-loop controls under control energy constraints and reduced it to a convex optimization program. More recently, riding on novel developments (reported in \cite{ref:DasAraCheCha-22}) on tractable techniques to solve convex semi-infinite programs, solutions to \eqref{eq:approx-ready linear robust MPC} ahead (analogs of \eqref{eq:approx-ready robust MPC}) with polyhedral constraints under affine-feedback-in-the-noise control policies (pioneered in \cite{ref:Lof-03}) have been reported in \cite{ref:GanGupCha-23}.
\end{remark}


\section{Linear MPC}
\label{sec:linear_case}

This section is devoted to linear MPC problems with the right-hand side \( \Rbb^d \times \Rbb^{\dimcon} \times \Rbb^d \ni (x_t,u_t,w_t ) \mapsto \field(x_t,u_t,w_t )\Let Ax_t + B u_t + w_t\) for each \(t \in \mathbb{N}\). Consider the linear and time-invariant dynamical system 
\begin{align}\label{eq:lin_system}
    \st_{t+1}= A\st_t+B\ut_t+ \dist_t,\quad \st_0\,\,\text{given},\,\,t \in \mathbb{N},
\end{align}
with system matrix \(A \in \Rbb^{d \times d}\) and actuation matrix \(B \in \Rbb^{d \times {\dimcon}}\). We assume that the state, control, and uncertainty constraint sets are \emph{polytopic}, each containing the respective origin in its interior. Let the cost-per-stage and the terminal cost functions are quadratic, i.e., \((\dummyx,\dummyu) \mapsto \cost(\dummyx,\dummyu) \Let \inprod{\dummyx}{Q\dummyx} + \inprod{\dummyu }{R\dummyu}\in \lcro{0}{+\infty},\) and \(\xi \mapsto \fcost(\xi)\Let \inprod{\dummyx}{P\dummyx}\in \lcro{0}{+\infty},\) with given positive (semi) definite matrices \(Q = Q^{\top} \in \Rbb^{d \times d}, R = R^{\top} \in \Rbb^{{\dimcon} \times {\dimcon}},\) and \(P=P^{\top} \in \Rbb^{d\times d}\). In addition, let the data \eqref{eq:system:terminal_set}--\eqref{eq:system:policies} continue to hold. Given these ingredients, the baseline receding horizon optimal control problem is given by:
\begin{equation}
	\label{eq:baseline linear robust MPC}
	\begin{aligned}
		& \inf_{\policy(\cdot)} \sup_{W} && \hspace{-3mm}\inprod{\dummyx_{\horizon}}{P\dummyx_{\horizon}}+\sum_{t=0}^{\horizon-1} \inprod{\dummyx_t}{Q\dummyx_t} + \inprod{\dummyu_t }{R\dummyu_t}\\
		& \sbjto && \hspace{-3mm}\begin{cases}
			\text{the dynamics}\,\,\eqref{eq:lin_system},\,\dummyx_0=\xz,\\ \dummyx_t\in\Mbb,\,\dummyx_\horizon\in\admfinst,\text{ and }\dummyu_t\in\admact\\
			\quad\text{for all }(\dummyw_t,t) \in \Wbb \times \timestamp{0}{\horizon-1},\\\dummyu_t = \policy_t(\dummyx_t),\,
			\policy(\cdot)\in\policies,\\ W \Let (\dummyw_0, \ldots, \dummyw_{\horizon-1}).
		\end{cases}
	\end{aligned}
\end{equation}
Define \(\widetilde{w}_t \Let w_t+Bv_t\) and let \(\widetilde{W} \Let \Wbb \oplus B \Vbb\). We synthesize the receding horizon control via the following linear robust optimal control problem: 
\begin{equation}
	\label{eq:approx-ready linear robust MPC}
	\begin{aligned}
		& \inf_{\policy(\cdot)} \sup_{W} && \hspace{-3mm}\inprod{\dummyx_{\horizon}}{P\dummyx_{\horizon}}+\sum_{t=0}^{\horizon-1} \inprod{\dummyx_t}{Q\dummyx_t} + \inprod{\dummyu_t }{R\dummyu_t}\\
		& \sbjto && \hspace{-3mm}\begin{cases}
			\dummyx_{t+1} = A \dummyx_t + B \dummyu_t + B \dummyv_t + \dummyw_t,\,\dummyx_0=\xz,\\
			\dummyx_t\in\Mbb,\, \dummyx_\horizon\in\admfinst,\text{ and }\dummyu_t+\dummyv_t\in\admact\\
			\quad\text{for all }(\varsigma_t,t)\in\widetilde{W}
			\times \timestamp{0}{\horizon-1},\\
			\dummyu_t = \policy_t(\dummyx_t),\,\policy(\cdot)\in\policies, W \Let (\varsigma_1,\ldots,\varsigma_{\horizon-1}).
		\end{cases}
	\end{aligned}
\end{equation}
As before, we denote the first element of the policy of the problem \eqref{eq:approx-ready linear robust MPC} by \(\upopt(\cdot)\) which is the approximation-ready receding horizon optimal policy. The following corollary mimics the Theorem \eqref{thrm:stability_main_result} in \S\ref{sec:main_results}.

\begin{corollary}\label{cor:lin_mpc_stability}
Consider the constrained optimal control problem \eqref{eq:approx-ready linear robust MPC} along with its associated data and suppose that the Assumption \eqref{a:approx-ready exun} holds. Let \(\upopt(\cdot)\) be the unique approximation-ready receding horizon policy corresponding to \eqref{eq:approx-ready linear robust MPC}. Then the following assertions hold:
\begin{enumerate}[label=\textup{(\ref{cor:lin_mpc_stability}-\alph*)}, leftmargin=*, widest=b, align=left]
\item For every given \(\eps>0\), there exist a generating function \(\psi(\cdot) \in \mathcal{S}(\Rbb^d)\), a pair \((h,\Dd) \in \loro{0}{+\infty}^2\), and \(\rzero >0\), such that the approximate feedback map \(\fset \ni \st \mapsto \mutrunc(\st) \in \Ubb\) defined in \eqref{eq:approx_policy} is within a uniform error margin \(\varepsilon\) from \(\upopt(\cdot)\); to wit,
		\begin{equation}
		    	\|\upopt(x) - \mutrunc(x)\| \le \eps \quad \text{for all}\,x \in \fset.  \nn
		\end{equation}
\item In addition, suppose that Assumption \eqref{assump:stability_assumptions} holds with \(\pfield \bigl(\dummyx,\dummyu,(\dummyw,\dummyv) \bigr) \Let A\dummyx+B \dummyu+B \dummyv+\dummyw\). Then under the approximate feedback map \(\fset \ni \st \mapsto \mutrunc(\st)\), constructed via Algorithm \eqref{alg:extension_algo}, the system \eqref{eq:lin_system} is ISS-like stable in the sense of Definition \eqref{appen_b_Regional_Practical_ISS}. 
\end{enumerate}
\end{corollary}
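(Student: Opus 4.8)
The plan is to recognize the linear problem \eqref{eq:approx-ready linear robust MPC} as a concrete instance of the general nonlinear problem \eqref{eq:approx-ready robust MPC} and then transport the conclusions of Theorem \ref{thrm:stability_main_result} wholesale. Concretely, taking \(\pfield(\dummyx,\dummyu,(\dummyw,\dummyv)) \Let A\dummyx + B\dummyu + B\dummyv + \dummyw\) --- equivalently, absorbing the actuation error into the aggregated disturbance \(\widetilde{w}_t \Let w_t + Bv_t \in \widetilde{W} = \Wbb \oplus B\Vbb\) --- the dynamics, the stage and terminal costs, the terminal set, and the policy class of \eqref{eq:approx-ready linear robust MPC} coincide with those of \eqref{eq:approx-ready robust MPC}. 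Since \(\field\) is affine (hence continuous) and \(\field(0,0,0)=0\), the structural hypotheses \eqref{eq:noisy-data1_1}--\eqref{eq:noisy_baseline_data} are met, while Assumption \eqref{a:approx-ready exun} is imposed directly.

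The one ingredient that Theorem \ref{thrm:stability_main_result} took as an explicit assumption but that the corollary does \emph{not} list is the Lipschitz continuity of \(\upopt(\cdot)\); in the linear regime this must instead be \emph{derived} from the quadratic-cost, polytopic-constraint, affine-dynamics structure. The relevant fact, classical in multiparametric programming (cf.\ \cite{ref:BemMorDuaPis-02} in the nominal case and \cite{ref:bemporad2003min, ref:DePena_Explicit_minmax} in the robust minmax case), is that the feasible set \(\fset\) is a compact polytope and that, under the uniqueness postulated in Assumption \eqref{a:approx-ready exun}, the first-stage optimizer \(\fset \ni \xz \mapsto \upopt(\xz)\) is a \emph{continuous, piecewise-affine} map over a finite polyhedral partition of \(\fset\) into critical regions. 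A continuous piecewise-affine map over such a partition of a compact, convex polytope is globally Lipschitz, with rank \(L_0\) bounded by the largest gain among the finitely many affine pieces (telescoping along the segment joining any two points, using convexity of \(\fset\) and continuity across cell interfaces). This supplies exactly the Lipschitz hypothesis required by Theorem \ref{thrm:stability_main_result}.

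With Lipschitz continuity in hand, assertion (a) is immediate: the approximation estimate \eqref{pf:final_error_estimate} depends only on \(L_0\), on \(\unifnorm{\upopt(\cdot)}\) (finite because \(\Ubb\) is compact), and on the quasi-interpolation parameters, none of which distinguish the linear case from the general one. Hence the same three-step selection of \((\Dd, h, \rzero)\) carried out in the proof of Theorem \ref{thrm:stability_main_result} yields \(\|\upopt(x) - \mutrunc(x)\| \le \eps\) for all \(x \in \fset\). For assertion (b), with the stated \(\pfield\) the closed loop under \(\mutrunc\) satisfies \(x_{t+1} = Ax_t + B\mutrunc(x_t) + w_t = Ax_t + B\upopt(x_t) + Bv_t + w_t = \pfield\bigl(x_t,\upopt(x_t),(w_t,v_t)\bigr)\), with \(v_t \Let \mutrunc(x_t) - \upopt(x_t)\) the bounded, state-dependent actuation noise, exactly mirroring \eqref{proof:closed_loop_2}. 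Invoking Assumption \eqref{assump:stability_assumptions} (with \(\pfield(\dummyx,\dummyu,(\dummyw,\dummyv)) = A\dummyx + B\dummyu + B\dummyv + \dummyw\)) delivers the value-function descent \eqref{eq:value_descent}, and the framework of \cite[\S3]{ref:MayFal-19} then furnishes ISS-like stability and recursive feasibility in the sense of Definition \eqref{appen_b_Regional_Practical_ISS}, completing the argument.

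The main obstacle is the Lipschitz claim of the second paragraph: whereas the continuous piecewise-affine structure of the \emph{nominal} explicit LQ-MPC feedback is textbook, its counterpart for the \emph{robust minmax} problem requires care, because the admissible policy class (open-loop versus affine-in-the-noise) dictates whether the inner maximization collapses the minmax into a single convex parametric program that retains the multiparametric-quadratic form. The key verification is therefore that the specific policy parametrization underlying \eqref{eq:approx-ready linear robust MPC} reduces the minmax to a convex parametric program whose solution map inherits continuity and piecewise-affineness over a finite critical-region partition; in the absence of such a reduction one would simply retain Lipschitz continuity of \(\upopt(\cdot)\) as a hypothesis, precisely as in the general Theorem \ref{thrm:stability_main_result}.
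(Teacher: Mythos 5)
Your proposal is correct and takes essentially the same route as the paper: the paper's own proof likewise reduces the corollary to Theorem \ref{thrm:stability_main_result} by noting that the feedback law of \eqref{eq:approx-ready linear robust MPC} is a continuous piecewise affine function of the state, hence Lipschitz, settling by citation (to \cite{ref:camacho_affinity}) exactly the min-max piecewise-affinity question you flag as the key verification in your final paragraph. Beyond that, your application of the general theorem for assertions (a) and (b) mirrors the paper's one-line conclusion that Theorem \ref{thrm:stability_main_result} ``applies and yields the proof at once.''
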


\begin{proof}
The feedback law \(\st \mapsto \upopt (\st)\) concerning the problem \eqref{eq:approx-ready linear robust MPC} is a continuous piecewise affine function of states \cite{ref:camacho_affinity} and is thus Lipschitz continuous. Consequently, Theorem \eqref{thrm:stability_main_result} applies and yields the proof at once.
\end{proof}

\section{Numerical experiments}\label{sec:num_exp}\
In this section, we present four different numerical examples of both linear and nonlinear MPC to illustrate the applicability of the QuIFS algorithm.


\subsection{Linear MPC}


\begin{example}\label{exmp:lmpc_1}
We start with a two-dimensional example in the linear regime for two specific reasons: (a) to depict the approximation error characteristics, and (b) to show the efficacy of the Lipschitz extension procedure. To this end, for simplicity, we start with a system without any external disturbance, i.e., \(w_t = 0\) (in Example \eqref{exmp:lmpc_0} we consider a noisy dynamical system), but we synthesize (according to the QuIFS algorithm) the approximation-ready policy by translating the nominal MPC problem to a minmax problem. Consider the discrete-time linear time-invariant dynamics \cite{ref:chen2018approximating}:
\begin{equation}\label{eq:exmp_1_dyn}
\st_{t+1}= \begin{pmatrix}
1 & 0.1 \\ 0 & 1
\end{pmatrix} \st_t + \begin{pmatrix}
0.005 \\ 0.1
\end{pmatrix} \ut_t.
\end{equation}
Fix a time horizon \(\horizon \Let 15\) and consider the following finite-horizon discrete-time optimal control problem
\begin{align}\label{eq:exmp_1_linear-original problem}
   \hspace{-3mm} \begin{aligned}
        &\inf_{(\dummyu_t)_{t=0}^{\horizon-1}}  && \hspace{-3mm}\sum_{t=0}^{\horizon-1}\inprod{\dummyx_t}{Q\dummyx_t}+\inprod{\dummyu_t}{R\dummyu_t}\\
            &\sbjto && \hspace{-3mm}\begin{cases}
          \text{the dynamics }\eqref{eq:exmp_1_dyn},\,\dummyx_0=\xz,\\
          \dummyx_t \in \Mbb,\text{ and }\dummyu_t \in \Ubb \,\,\text{for all }t \in \timestamp{0}{\horizon-1},
            \end{cases}
        \end{aligned}
    \end{align}
where \(\Mbb \Let \aset[]{(\dummyx_1,\dummyx_2) \in \Rbb^2 \suchthat |\dummyx_1|\le 6,\,|\dummyx_2|\le 1}\), \(\Ubb \Let \aset[]{\dummyu \in \Rbb \suchthat |\dummyu|\le 2}\). The state weighting matrix \(Q\) is the \(2\times 2\) identity matrix and the control weighting matrix is \(R \Let 1\). The policy \(\upopt(\cdot)\) at points was obtained by gridding the state-space \([-6,6] \times [-1,1] \) with a cardinal grid of step size \(h=0.0035\). We performed our numerical computations on MATLAB 2019b using the parallel computation toolbox in an \(36\) core server with Intel(R) Xeon(R) CPU E\(5-2699\) v\(3\), \(4.30\) GHz with \(128\) Gigabyte of RAM, and we employed the solver MOSEK \cite{ref:mosek} along with the robust optimization module \cite{ref:lofberg2012automatic} in YALMIP \cite{ref:YALMIP_lofberg2004} to solve the problem \eqref{eq:exmp_1_linear-original problem} where per point computation-time was \(\sim\) \(1.5\) sec. 
It turns out that the Lipschitz constant of \(\upopt(\cdot)\) is bounded above by \(L_{0} \approx 2 \). For the quasi-interpolation scheme, we picked the Laguerre polynomial-based basis function given by:
\begin{align}\label{num:high_order_basis}
    \psi_{2M_0}(x) \Let \pi^{-d/2}\mathsf{L}_{M_0-1}^{d/2}\big(\|x\|^2\bigr)\epower{-\|x\|^2},
\end{align}
where the Laguerre polynomials are given by
\begin{align}\label{num:lagueere_pol}
    \mathsf{L}^{j}_{k}(x) \Let \frac{x^{-j}}{k!}\epower{x} \left(\frac{\dd}{\dd x}\right)^k(x^{k+j}\epower{x}), \,j>-1.
\end{align}
In \cite{ref:maz_mfld} it is shown that a \(d\)-dimensional approximant of the form \eqref{eq:approx_policy} with the basis \eqref{num:high_order_basis} leads to uniform approximation of order \(M \Let 2M_0\), i.e., \(\mathcal{O}(h^{2M_0})\), where \(M_0=1,2,\ldots\), and \(M\) is the order of the moment condition; (see \eqref{eq:moment_condition}). The Laguerre-Gaussian basis function, i.e., \eqref{num:high_order_basis} with \(d=2\) and \(M_0 = 3\) is given by
\begin{align}\label{num:sixth_order_basis_ex_1}
   \hspace{-1mm} \Rbb^2 \ni   x \mapsto \psi(x) \Let \frac{1}{\pi} \left( 3- 3 \|x\|^2+ \frac{1}{2}\|x\|^4\right)\epower{-\|x\|^2}
    \end{align}
that satisfies a moment condition of order \(M=6\), and thus \(C_{\gamma}=1/6\). For illustration, let us fix an error tolerance \(\eps= 0.005\). Fix the shape parameter \(\Dd = 2\) and simple algebra leads to the parameter \(h = \frac{\eps}{3C_{\gamma}L_{0}\sqrt{\Dd}} = 0.004\). Define \(\overline{z} \Let x-mh\). We choose \(\rzero=3\), i.e., 7 terms are used at each \(x\) in the following quasi-interpolant:
\begin{align}\label{quasi_lmpc_ex_1}
     \mutrunc(x) \Let \frac{1}{\pi\Dd}\sum_{mh \in \finset_x(\rzero)}\hspace{-3mm}\upopt(mh) \,\biggl( 3 - 3 \frac{\|\overline{z}\|^2}{h^2\Dd}+\frac{1}{2}\frac{\|\overline{z}\|^4}{h^4 \Dd^2}\biggr) \epower{-\frac{\|\overline{z}\|^2}{\Dd h^2}}.
\end{align}
It is guaranteed that for the given \(\eps= 0.005\), our one-shot synthesis produces the pair \(\bigl(h,\Dd\bigr)=\bigl(0.004,2\bigr)\) such that \(\|\upopt(\cdot)- \mutrunc(\cdot)\|_{\mathrm{u}}\le 0.005\). Figure \eqref{fig:error_lmpc_ex_1} numerically verifies this fact. Table \eqref{tab:error_margin_LMPC_ex_1} shows a list of user-defined tolerance values and the corresponding \(h\) and \(\Dd\) needed to achieve it. Although the error tolerance \(\eps\) between \(\upopt(\cdot)\) and \(\mutrunc(\cdot)\) was respected (see left-hand subfigure in Figure \ref{fig:error_lmpc_ex_1}) even before the Lipschitz extension, we employed the Lipschitz extension algorithm described in \S\ref{sec:extension_algo} to illustrate its positive effects. To this end, we set \(\rzero = 50\) and pre-calculated all the values of \(\upopt(\cdot)\) both inside \(\widehat{X}_N\) (an \(h\)-net of \(X_N\)) and outside of \(\widehat{X}_N\) via \eqref{eq:extented_policy} and subsequently we employed the approximation scheme \eqref{quasi_lmpc_ex_1} to generate \(\mutrunc(\cdot)\) and restricted it to \(\widehat{X}_N\) eventually. Compared to the left-hand subfigure of Figure \eqref{fig:error_lmpc_ex_1}, is it clear that the error reduced significantly after the extension algorithm was employed. The corresponding storage and computation-time requirements are recorded in Table \eqref{tab:metadata_ex_1}.

\begin{figure*}[h]
\centering
	\begin{subfigure}[b]{0.45\textwidth}
    \includegraphics[scale=.30]{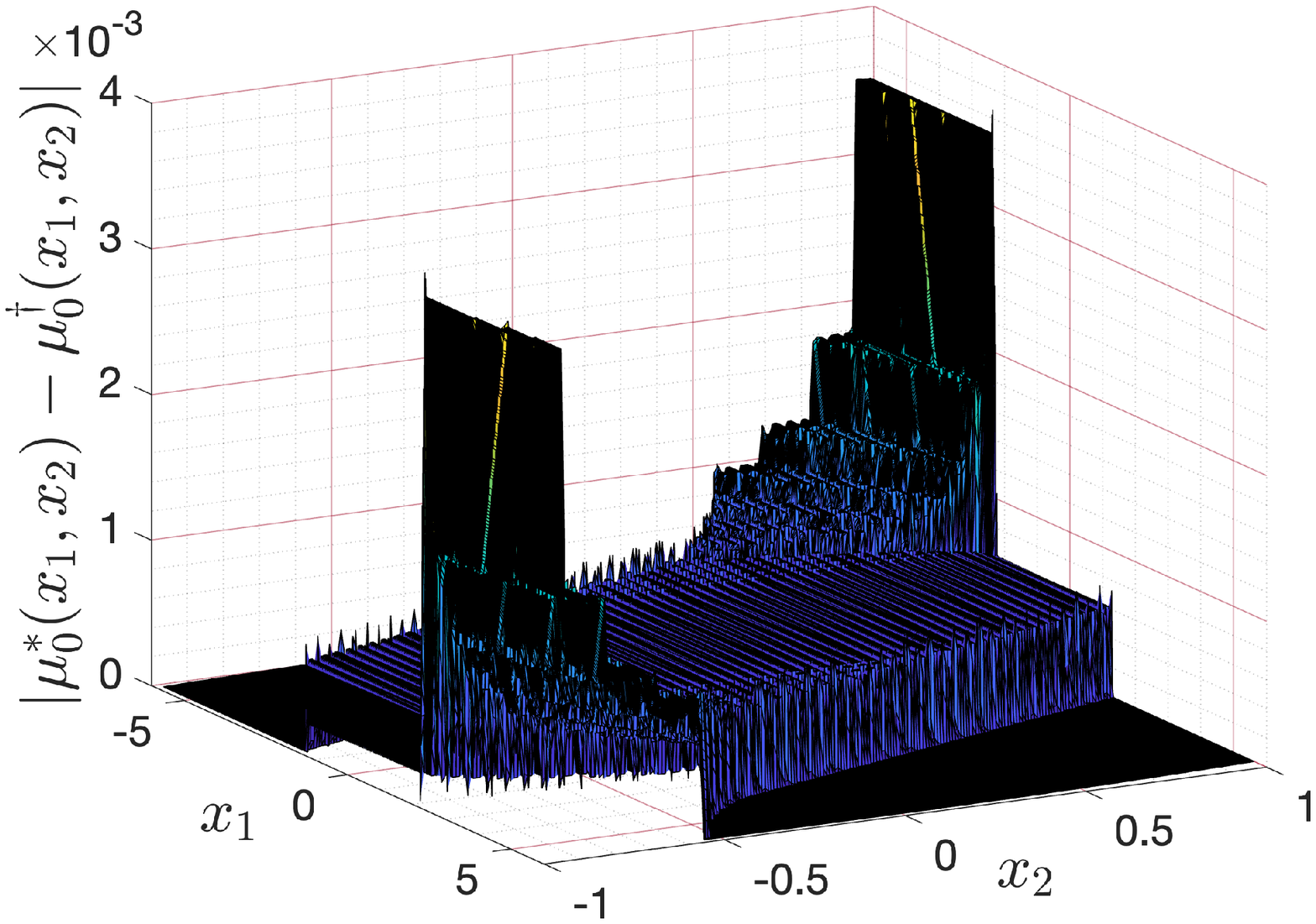}
  \end{subfigure}
	\begin{subfigure}[b]{0.45\textwidth}
    \includegraphics[scale=.30]{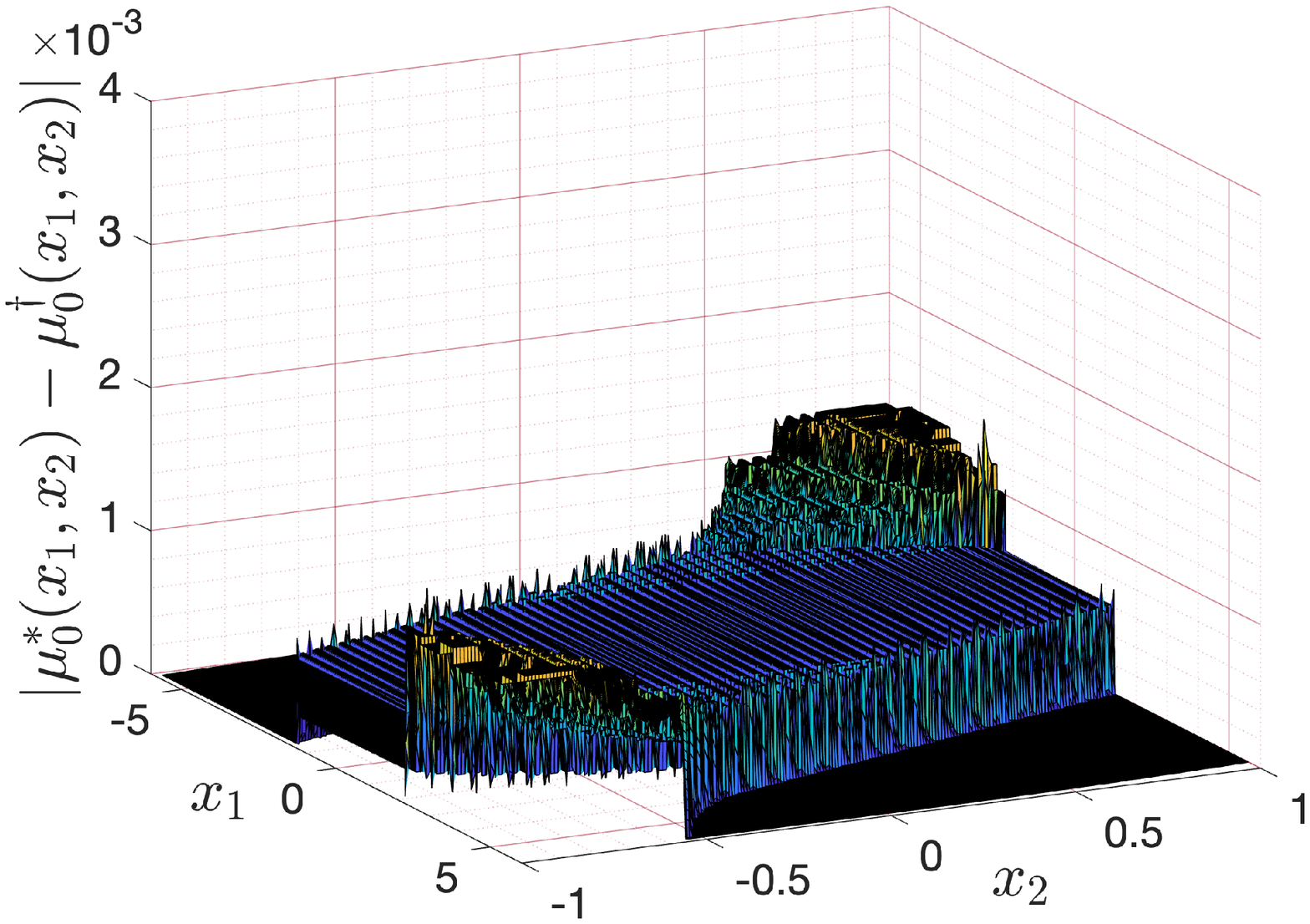}
  \end{subfigure}
\caption{The absolute error between \(\upopt(\cdot)\) and \(\mutrunc(\cdot)\) in Example \ref{exmp:lmpc_1} without the employment of the Lipschitz extension algorithm on the left and with it on the right. Notice that the vertical axis is scaled by the factor of \(10^{-3}\). While in the first case the preassigned error margin \(\eps\) was respected, this may not be typical, and the extension procedure should be carried out in order to conform to the theoretical guarantees.}\label{fig:error_lmpc_ex_1}
\end{figure*}
 \begin{table}[b]
  \centering
  \begin{tabular}{lccc}
        \toprule
          Threshold (\(\eps\))  & \(h\) & \(\mathcal{D}\)                 \\
        \midrule
           \( 50 \times 10^{-3} \) & \(0.04\)  & \(2\)                   \\
          \( 5 \times 10^{-3} \) & \(0.004\)  & \(2\)                   \\
        \bottomrule
    \end{tabular}
    \caption{The error-margin \(\eps\) and the pair \(\big(h,\mathcal{D}\bigr)\) associated with Example \eqref{exmp:lmpc_1}.}
  \label{tab:error_margin_LMPC_ex_1}
\end{table}

\begin{table}[b]
  \centering
\begin{tabular}{lccc}
\toprule
Method  & \textbf{CT} \(\bigl(\mutrunc(\cdot)\bigr)\) & \textbf{CT} (online) & Storage 
\\ \midrule
QuIFS & 23 sec  & 0.5 m.sec & 25 KB
\\
MPT \cite{ref:MPT} &  19 sec & 0.8 m.sec & 17 KB
\\ \bottomrule
    \end{tabular}
    \caption{Computation-time \textbf{CT} and storage requirements with \(\eps=0.05\) for Example \ref{exmp:lmpc_1}.}
  \label{tab:metadata_ex_1}
\end{table}
\end{example}

\begin{example}\label{exmp:lmpc_0}
Consider the fourth-order linear dynamical system:
\begin{align}\label{eq:fourth_order_dyn_II}
    x_{t+1} = Ax_t + B u_t + w_t,
\end{align}
with the state and the actuation matrices:
\[A \Let \begin{pmatrix} 0.4035 & 0.3704 & 0.2935 & -0.7258 \\ -0.2114 & 0.6405 & -0.6717 & -0.0420 \\ 0.8368 & 0.0175 & -0.2806 & 0.3808 \\ -0.0724 & 0.6001 & 0.5552 & 0.4919 \end{pmatrix},\]
\[B \Let \begin{pmatrix} 1.6124 & 0.4086 & -1.4512 & -0.6761 \end{pmatrix}^{\top}.\]
Fix \(N=17\) and consider the robust optimal control problem:
\begin{equation}
	\label{eq:RMPC_fourth_order_II}
	\begin{aligned}
		&  \inf_{(\pi_t)_{t=0}^{\horizon-1}} \sup_{{W}} && \sum_{t=0}^{\horizon-1} \inprod{\dummyx_t}{Q\dummyx_t} + \inprod{\dummyu_t}{R\dummyu_t} \\
		& \sbjto && \begin{cases}
			 \text{the dynamics }\eqref{eq:fourth_order_dyn_II},\,\dummyx_0=\xz,\\
			\dummyx_t \in \Mbb,\text{ and }\dummyu_t \in \Ubb\\
           \quad \text{for all }(w_t,t) \in [-0.01,0.01] \times \timestamp{0}{\horizon-1},\\
            {W} \Let (w_0,\ldots,w_{\horizon-1}),
		\end{cases}
	\end{aligned}
\end{equation}
where \(\Mbb \Let \aset[]{\dummyx \in \Rbb^2 \suchthat \|\dummyx\|_{\infty} \le 5}\) and \(\Ubb \Let \aset[]{\dummyu \suchthat |\dummyu| \le 0.2}\), \(Q \Let I_{4 \times 4}\) is a \(4 \times 4\)-identity matrix, \(R \Let 0.2\). To find the explicit control law we employed the MPT Toolbox \cite{ref:MPT} with \(w_t=0\), which terminated unsuccessfully around \(5\times 10^4\) regions and was not able to compute the explicit feedback. Keeping the disturbance element \(w_t\) as above and applying the explicit synthesis algorithm reported in \cite{ref:DePena_Explicit_minmax} and \cite{ref:Gao:EMPC}, we observed that the algorithm terminated unsuccessfully as well without being able to compute the explicit feedback due to a large number of regions and the corresponding vertex enumeration procedure.

\begin{figure*}[htbp]
	\centering
  \begin{subfigure}[b]{0.45\textwidth}
    \includegraphics[scale=0.32]{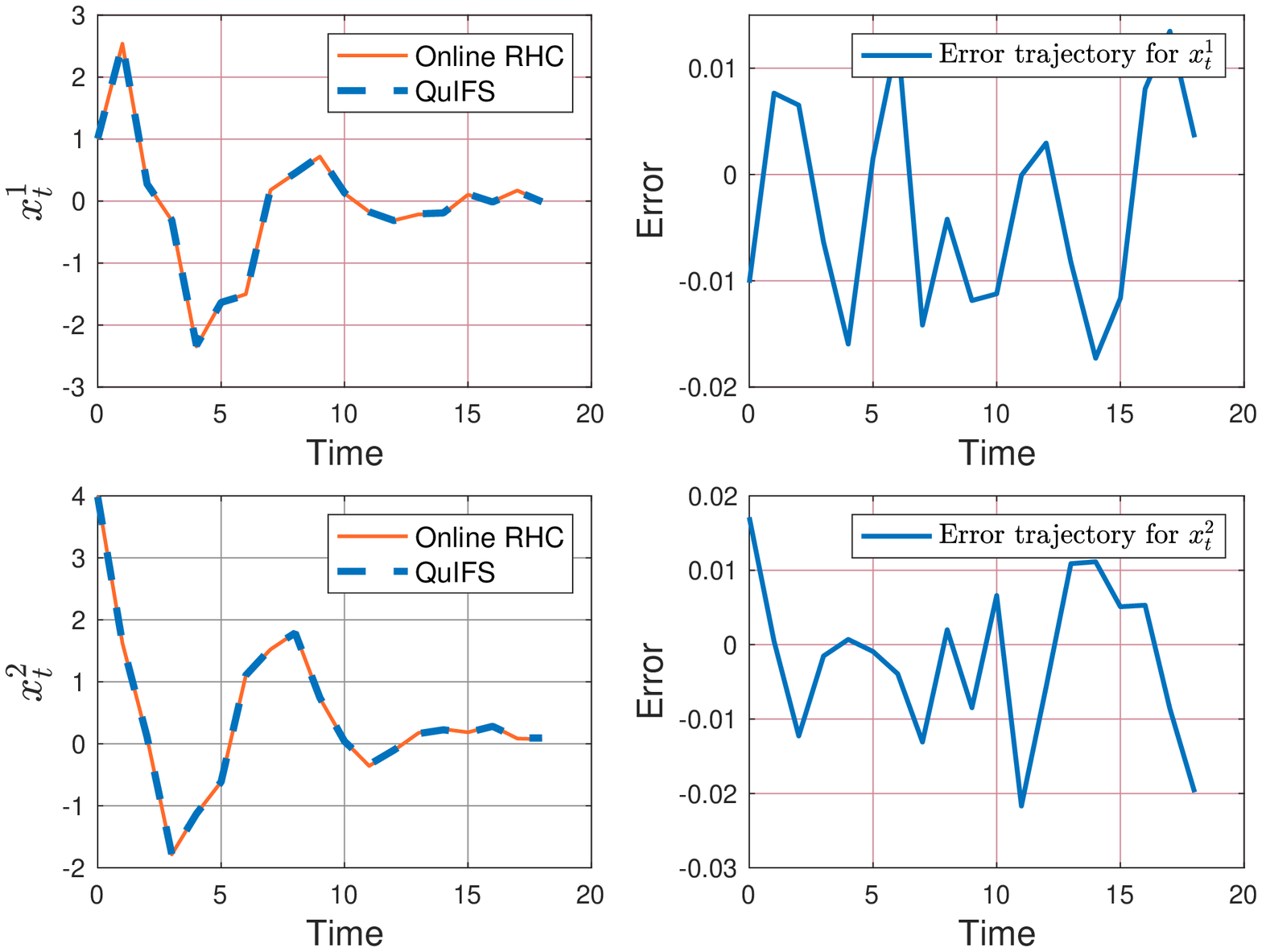}
  \end{subfigure}
  \begin{subfigure}[b]{0.45\textwidth}
    \includegraphics[scale=0.32]{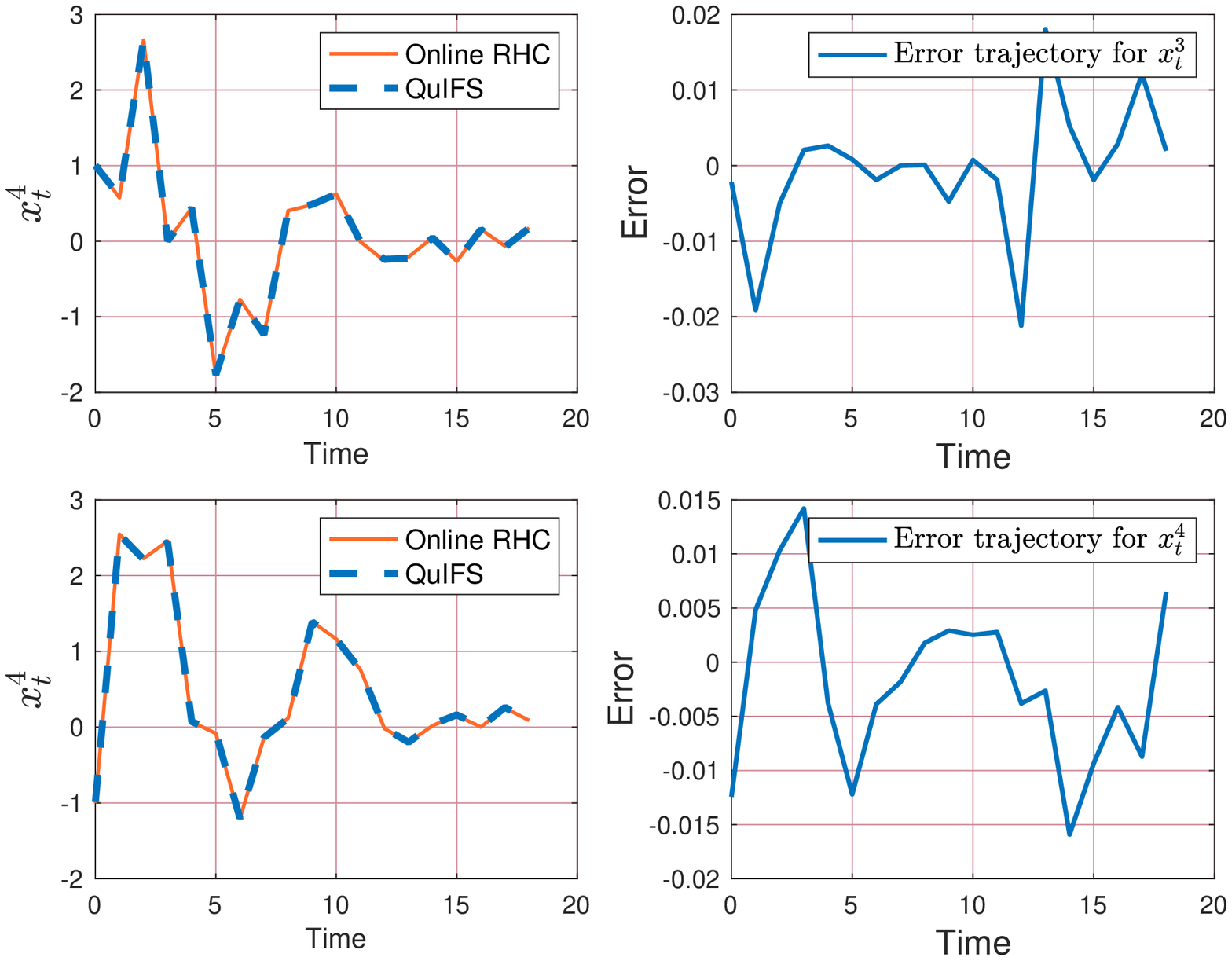}
  \end{subfigure}
\caption{Closed-loop state trajectories starting from \(x(0) \Let (1,4,1,-1)^{\top}\) for Example \ref{exmp:lmpc_0}. QuIFS performs better in terms of closeness to the online RHC trajectories in comparison to the trajectories reported in \cite[Fig. 6(a) and Fig. 6(b)]{ref:summers-multires}; see the state trajectories, and especially the errors in there.}\label{fig:ex_2_fourth_x1_to_x4}
\end{figure*}

We then deployed our QuIFS algorithm. We fix an approximation error \(\eps \Let 0.05\), i.e., \(\|v_t\|_{\infty} \le 0.05\). Define \(\widetilde{w}_t \Let w_t+Bv_t\) and let \(\widetilde{W} \Let \Wbb \oplus B \Vbb\). The ensuing approximation-ready OCP reads as: 
\begin{equation}
	\label{eq:approx_ready_RMPC_fourth_order_II}
	\begin{aligned}
		&  \inf_{(\pi_t)_{t=0}^{\horizon-1}} \sup_{\mathsf{W}} && \sum_{t=0}^{\horizon-1} \inprod{\dummyx_t}{Q\dummyx_t} + \inprod{\dummyu_t}{R\dummyu_t} \\
		& \sbjto && \begin{cases}
	\dummyx_{t+1}=A\dummyx_t+B(\dummyu_t+\dummyv_t)+\dummyw_t,\,\dummyx_0=\xz,\\
			\dummyx_t \in \Mbb,\text{ and }\dummyu_t + \dummyv_t \in \Ubb \\
          \quad \text{for all }(\widetilde{w}_t,t) \in \widetilde{W} \times  \timestamp{0}{\horizon-1},\\
            \mathsf{W} \Let (\widetilde{w}_0,\ldots,\widetilde{w}_{\horizon-1}).
		\end{cases}
	\end{aligned}
\end{equation}
We kept the same computer/server specifications as in Example \eqref{exmp:lmpc_1}, and employed the solver MOSEK along with YALMIP's robust optimization framework to solve the problem \eqref{eq:approx_ready_RMPC_fourth_order_II} using disturbance feedback parameterization of the control policy \cite[Eq. 12a]{ref:Lof:minmax:cdc} at grid points of the underlying state space, with the grid size \(h = 0.01\) (specified below) dictated by the QuIFS algorithm and per point computation time was \(\sim\) 5 sec.
For us the dimension of the state-space \(d=4\). Selecting \(M_0=3\) and using \eqref{num:lagueere_pol}-\eqref{num:high_order_basis}, we get \(\mathsf{L}_2^2 (\dummyx) \Let 6 - 4 \dummyx + \tfrac{\dummyx^2}{2}\) and consequently 
\begin{align}\label{num:sixth_order_basis_nmpc}
      \hspace{-2mm} \Rbb^4 \ni  x \mapsto \psi_6(x) \Let \frac{1}{\pi^2} \left( 6- 4 \|x\|^2+ \frac{1}{2}\|x\|^4\right)\epower{-\|x\|^2}.
    \end{align}
The generating function \eqref{num:sixth_order_basis_nmpc} satisfies the moment condition of order \(M=6\), which means that the constant \(C_{\gamma} = 1/7\). Next, we fix the shape parameter \(\Dd = 2\), and we obtain the value of \(h = \frac{\varepsilon}{3C_\gamma L_0\sqrt{\Dd}} \approx 0.01\), where we have employed a conservative estimate of the Lipschitz constant \(L_0 = 8\) of \(\upopt(\cdot)\). Define \(\overline{z} \Let x-mh\). We picked \(\rzero=5\), which means that \(11\) terms are used in the quasi-interpolation formula:
\begin{align}\label{num:sixth_quasi_interpolant}
     \mutrunc(x) \Let\hspace{-1mm} \frac{1}{(\pi\Dd)^2}\hspace{-1mm}\sum_{mh \in \finset_x(\rzero)}\hspace{-4mm}\upopt(mh) \biggl( 6 - 4 \frac{\|\overline{z}\|^2}{h^2\Dd}+\frac{1}{2}\frac{\|\overline{z}\|^4}{h^4 \Dd^2}\biggr) \epower{-\frac{\|\overline{z}\|^2}{\Dd h^2}}
 \end{align}
for the approximate feedback synthesis. Consequently, it is guaranteed that with the approximant \(\mutrunc(\cdot)\) in \eqref{num:sixth_quasi_interpolant} and \((h,\Dd)=(0.01,2)\) the estimate \(\|\upopt(\cdot)-\mutrunc(\cdot)\|_{\mathrm{u}} \le 0.05\) holds.
We compare our results with the online receding horizon control trajectory. Observe that our results are visibly better than the one reported in \cite[Fig. 6(c)]{ref:summers-multires} in terms of closeness between the online RHC and the approximate trajectory and the error characteristics.
The storage requirements, the computation-time to generate the feedback map \(\mutrunc(\cdot)\), and the online computation-time are recorded in Table \eqref{tab:metadata_ex_0} for horizon \(N=17\). Similar statistics with \(N=6\) are given in Table \ref{tab:metadata_new_ex_0}. Figure \eqref{fig:ex_2_fourth_x1_to_x4} depicts the state trajectories and Figure \eqref{fig:ex_2_fourth_u} shows the online receding horizon and the approximate (explicit) control trajectories along with the error between them obtained from QuIFS on the same time scale.
\begin{figure}[htbp]
	\centering
\includegraphics[scale=0.4]{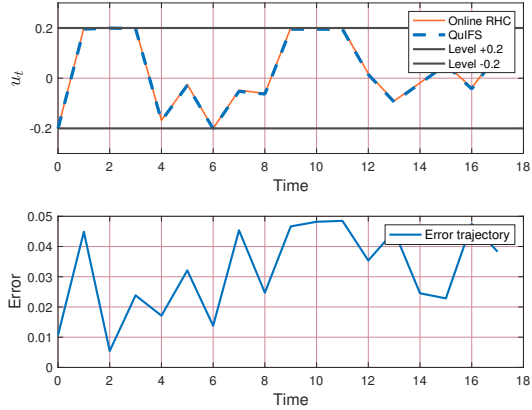}
\caption{The online receding horizon control and the solution obtained from QuIFS for Example \ref{exmp:lmpc_0}. See \cite[Fig. 6(c)]{ref:summers-multires} for a comparison.}\label{fig:ex_2_fourth_u}
\end{figure}
\begin{table}[h]
\centering
\begin{tabular}{lccc}
\toprule
Method  & \textbf{CT} \(\bigl(\mutrunc(\cdot)\bigr)\) & \textbf{CT} (online) & Storage \\ 
\midrule
QuIFS &  187 sec & 5 m.sec & 9 MB        
\\
 MPT \cite{ref:MPT} &  Terminated & NA & Terminated \\ 
 \cite{ref:DePena_Explicit_minmax, ref:Gao:EMPC} & unsuccessfully & & unsuccessfully \\
\bottomrule
    \end{tabular}
    \caption{Computation-time \textbf{CT} and storage requirements with horizon \(\horizon=17\) and \(\eps=0.05\) for Example \ref{exmp:lmpc_0}.}
  \label{tab:metadata_ex_0}
\end{table}

\begin{table}[h]
\centering
\begin{tabular}{lccc}
\toprule
Method  & \textbf{CT} \(\bigl(\mutrunc(\cdot)\bigr)\) & \textbf{CT} (online) & Storage \\ 
\midrule
QuIFS &  78 sec & 1 m.sec & 0.9 MB \\
\cite{ref:DePena_Explicit_minmax} & 120 sec  & 10 m.sec & 0.5 MB \\ 
 \cite{ref:Gao:EMPC} & 90 sec &  2 m.sec & 0.7 MB  \\
\bottomrule
\end{tabular}
\caption{Computation-time \textbf{CT} and storage requirements with horizon \(\horizon=6\) and \(\eps=0.05\) for Example \ref{exmp:lmpc_0}.}
\label{tab:metadata_new_ex_0}
\end{table}
\end{example}


\subsection{Nonlinear MPC}\label{sec:num_NMPC} In this section we provide two numerical examples concerning \emph{nonlinear} MPC to demonstrate the effectiveness of our algorithm. 
\begin{example}\label{exmp:nmpc_3}
Consider the continuous-time second-order nonlinear controlled dynamical system \cite{ref:canale:sm:approx}: 
\begin{align}\label{eq:nonlinear_3_example}
    \dot{x}_1(t)& = x_2(t) \nn \\ \dot{x}_2(t) & = u(t) - 0.6 x_2(t) -x_1(t)^3 - x_1(t).
\end{align}
\begin{figure*}[h]
	\centering
  \begin{subfigure}[b]{0.32\linewidth}
    \includegraphics[scale=0.22]{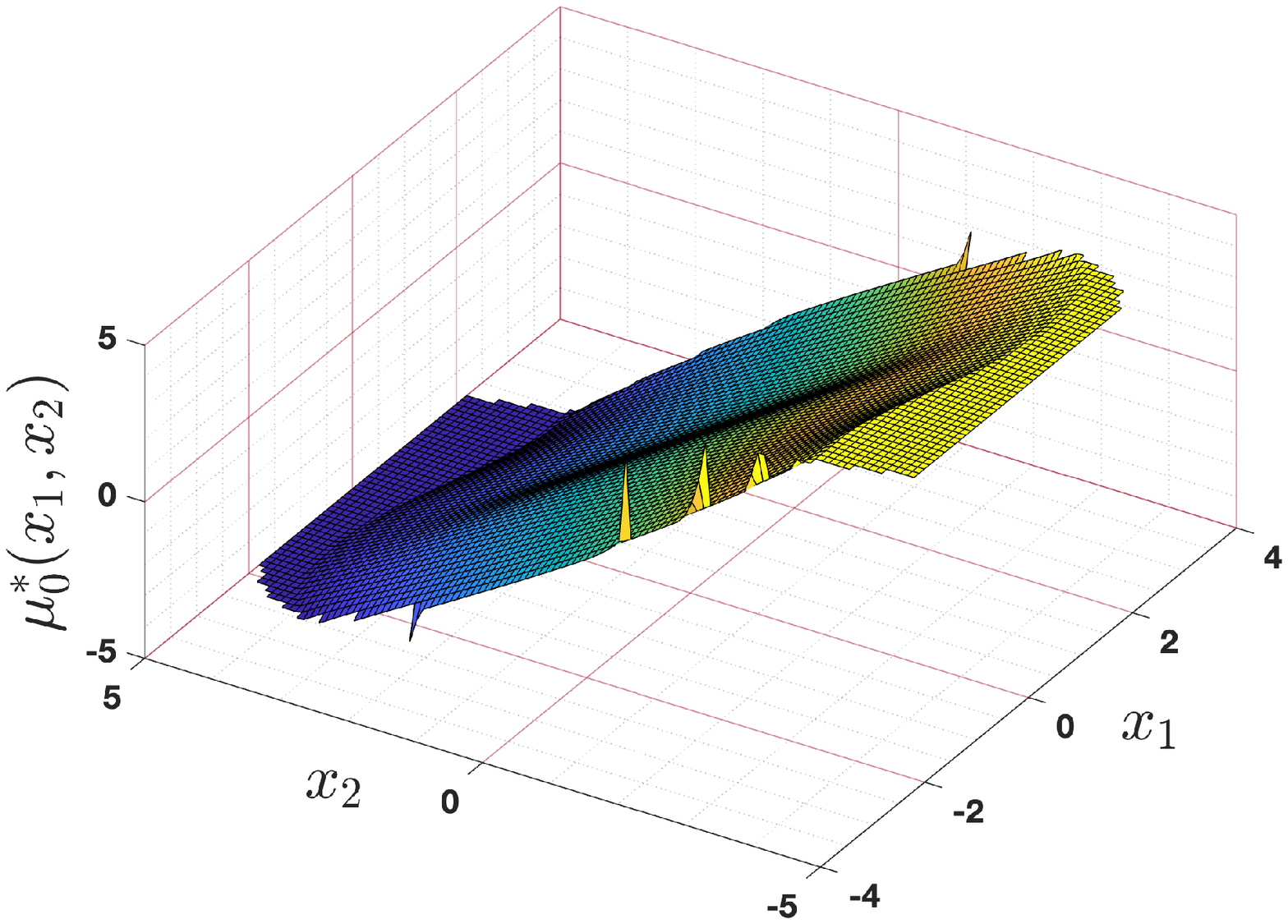}
  \end{subfigure}
  \begin{subfigure}[b]{0.32\linewidth}
    \includegraphics[scale=0.22]{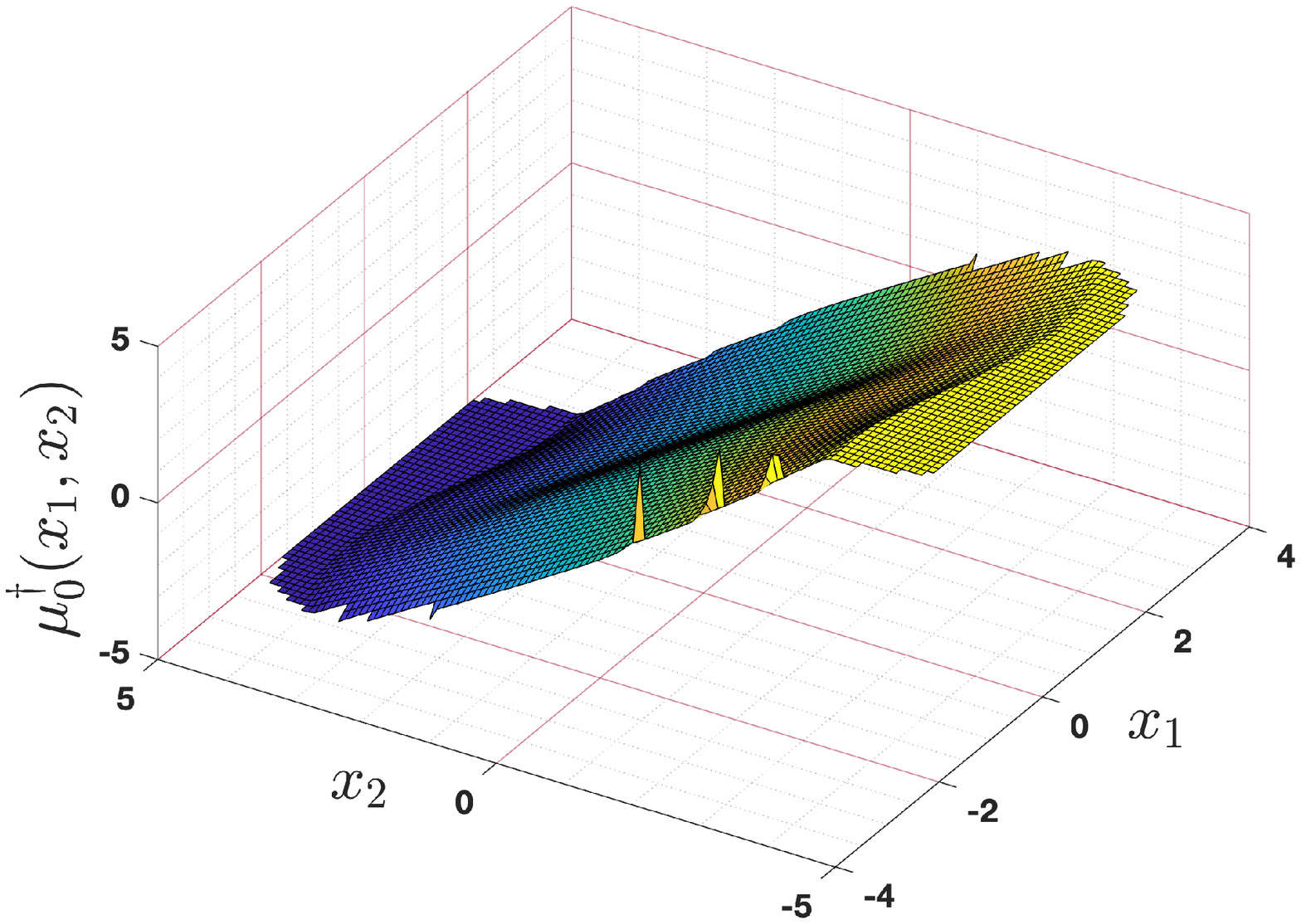}
  \end{subfigure}
  \begin{subfigure}[b]{0.3\linewidth}
    \includegraphics[scale=0.22]{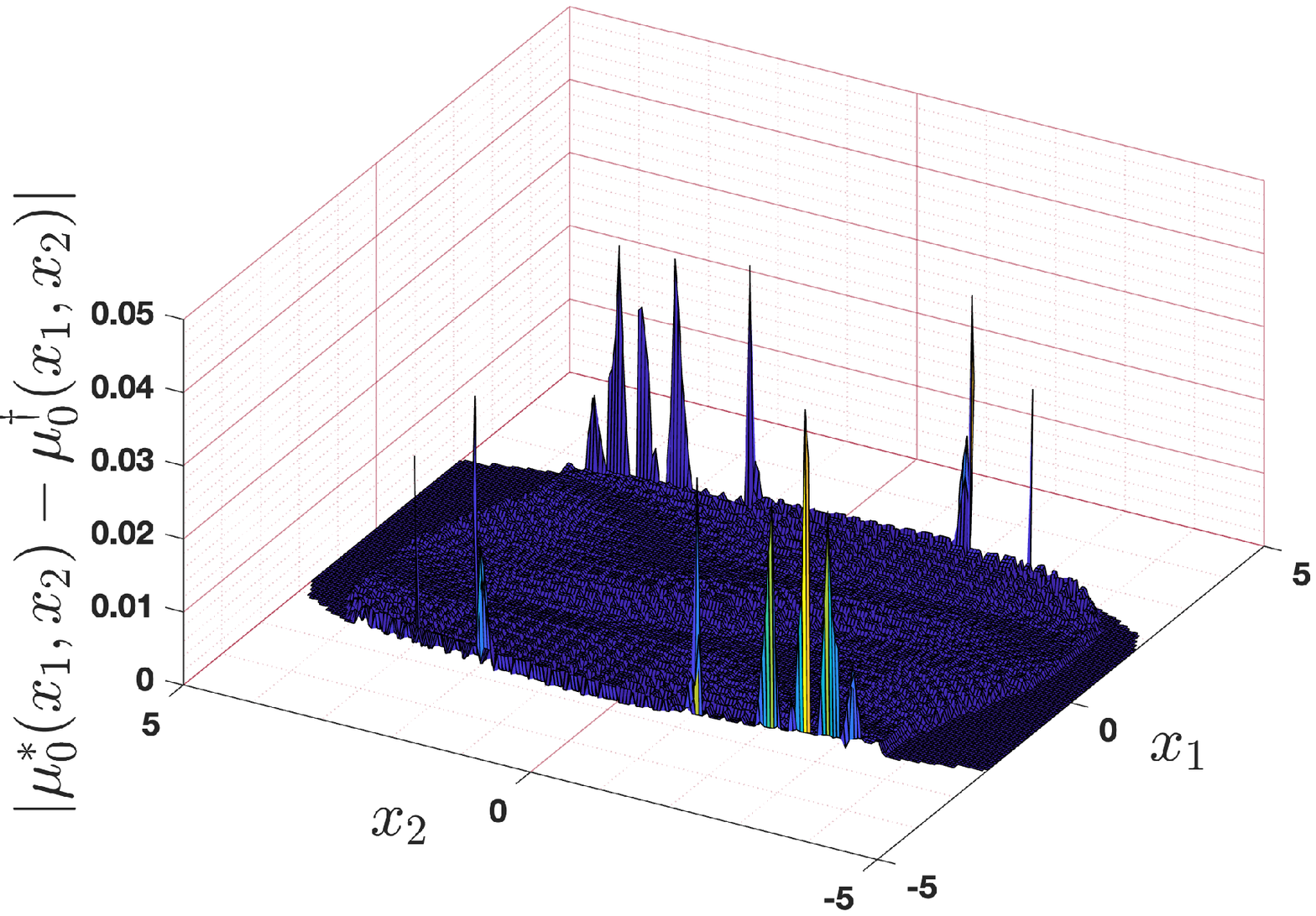}
  \end{subfigure}
\caption{The feedback \(\upopt(\cdot)\), the explicit feedback \(\mutrunc(\cdot)\), and the absolute error between \(\upopt(\cdot)\) and \(\mutrunc(\cdot)\) for Example \eqref{exmp:nmpc_3} with \(\eps=0.05\), and \(\bigl(h,\Dd \bigr)=(0.01,2)\).}\label{fig:can_Actual_Approx_NMPC}
\end{figure*}
The system dynamics \eqref{eq:nonlinear_3_example} is discretized using a forward-Euler scheme with sampling time \(T_s \Let 0.05\). Fix \(\horizon \Let 100\), and consider the finite horizon discrete-time optimal control problem 
\begin{align}
        \label{eq:exmp_3_nonlinear-original problem}
        \begin{aligned}
            &\inf_{(\dummyu_t)_{t=0}^{\horizon-1}}  && \hspace{-2mm}\sum_{t=0}^{\horizon-1} \inprod{\dummyx_t}{Q\dummyx_t}+\inprod{\dummyu_t}{R\dummyu_t}\\
            &\sbjto && \hspace{-2mm} \begin{cases}
          \text{discretized dynamics }\eqref{eq:nonlinear_3_example}, \,\dummyx_0 = \xz, \\
          \dummyx_t \in \Mbb, \text{ and }\dummyu_t \in \Ubb
            \text{ for all }t \in \timestamp{0}{\horizon-1},
            \end{cases}
        \end{aligned}
    \end{align}
where \(\Mbb \Let \aset[]{\dummyx \in \Rbb^2 \suchthat \|\dummyx\|_{\infty} \le 5}\), \(\Ubb \Let \aset[]{\dummyu \in \Rbb \suchthat |\dummyu| \le 5}\), \(Q \Let \mathbb{I}_{2 \times 2}\), and \(R \Let 0.5\). Let us fix \(\eps = 0.05 \). For the synthesis of the approximation-ready policy, as per our formulation, we consider the dynamics: 
\begin{align}\label{eq:nonlinear_3_example_noisy}
    \dot{x}_1(t) &= x_2(t), \nn \\
        \dot{x}_2(t) &= u(t) + v(t) -0.6x_2(t)-x_1(t)^3-x_1(t).
\end{align}
Then the approximation-ready robust optimal control problem is 
\begin{equation}
	\label{eq:approx_ready_RMPC_num_ex_0}
	\begin{aligned}
		&  \inf_{(\dummyu_t)_{t=0}^{\horizon-1}} \sup_{\mathsf{W}} && \sum_{t=0}^{\horizon-1} \inprod{\dummyx_t}{Q\dummyx_t} + \inprod{\dummyu_t}{R\dummyu_t} \\
		& \sbjto && \begin{cases}
			 \text{discretized dynamics }\eqref{eq:nonlinear_3_example_noisy},\,\dummyx_0= \xz,\\
			\dummyx_t \in \Mbb, \text{ and }\dummyu_t + \dummyv_t \in \Ubb \\
           \quad\text{for all } (\dummyv_t,t)\in [-0.05,0.05] \times \timestamp{0}{\horizon-1},\\
            \mathsf{W} \Let (\dummyv_0,\ldots,\dummyv_{\horizon-1}).
		\end{cases}
	\end{aligned}
\end{equation}
The solution to the problem \eqref{eq:approx_ready_RMPC_num_ex_0} was obtained by gridding the state-space \([-5,5] \times [-5,5] \) with step size \(h=0.01\) and solving the ensuing NLP \eqref{eq:approx_ready_RMPC_num_ex_0} in YALMIP using the NLP solver \(\mathsf{IPOPT}\) \cite{ref:IPOPT_Biegler} deriving a robust counterpart. The problem \eqref{eq:approx_ready_RMPC_num_ex_0} can be alternatively solved using the algorithm reported in \cite{ref:JK:MM:FA:2021} as well. We performed our numerical computation keeping the computer server identical to the one in Example \eqref{exmp:lmpc_1}, and computation-time per point was \(\sim\) \(2.3\) sec. Next we choose the Laguerre-Gaussian basis function with \(d=2\), and \(M_0=3\):
\begin{align}\label{num:sixth_order_basis}
      \Rbb^2 \ni  x \mapsto \overline{\psi}_6(x) \Let \frac{1}{\pi} \left( 3- 3 \|x\|^2+ \frac{1}{2}\|x\|^4\right)\epower{-\|x\|^2}.
    \end{align}
The generating function \(\overline{\psi}_6(\cdot)\) satisfies a moment condition of order \(M=6\), which means that the constant \(C_{\gamma} = 1/7\). Next, we fix the shape parameter \(\Dd = 2\), we obtain the value of \(h = \frac{\varepsilon}{3C_\gamma L_0\sqrt{\Dd}} \approx 0.01\), where \(L_0 = 4\) is a rough estimate of Lipschitz constant of \(\upopt(\cdot)\), estimated numerically. Choosing \(\rzero=3\), we employ the quasi-interpolant \eqref{quasi_lmpc_ex_1} with \(\overline{\psi}_{6}(\cdot)\) in place of \(\psi(\cdot)\) to generate the approximate policy \(\mutrunc(\cdot)\). We skipped the Lipschitz extension procedure for this example due to minor error fluctuations in the boundary of the feasible set. Figure \eqref{fig:can_Actual_Approx_NMPC} shows the actual and the approximated policies along with the error surface. Table \eqref{tab:metadata_ex_nl_2} collects the computation-time and storage requirement data concerning the explicit feedback law. 
Figures \eqref{fig:canale_state} and \eqref{fig:canale_control} depict the closed-loop trajectories obtained via applying QuIFS and \cite{Raimondo2012}. Clearly (see the enlarged snippets within the figures) QuIFS performs better than the algorithm established in \cite{Raimondo2012} in terms of closeness to the online RHC trajectories. 
\begin{figure}[ht]
\centering
\includegraphics[scale=0.4]{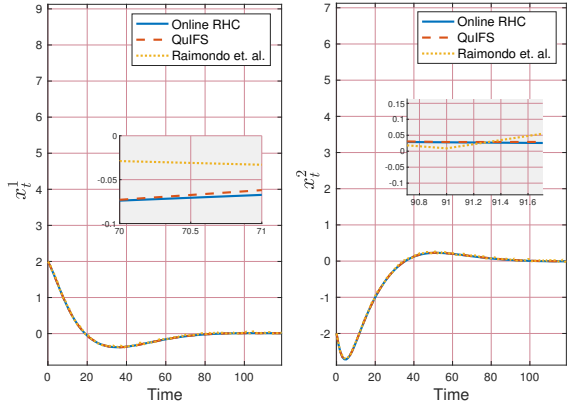}
\caption{State trajectories for Example \ref{exmp:nmpc_3} generated by QuIFS, the algorithm reported in \cite{Raimondo2012}, and the online receding horizon state trajectories for \(x_0 \Let \begin{pmatrix}2&-2 \end{pmatrix}^{\top}\) for a simulation length \(120\).}\label{fig:canale_state}
\end{figure}
\begin{figure}[ht]
\centering
\includegraphics[scale=0.35]{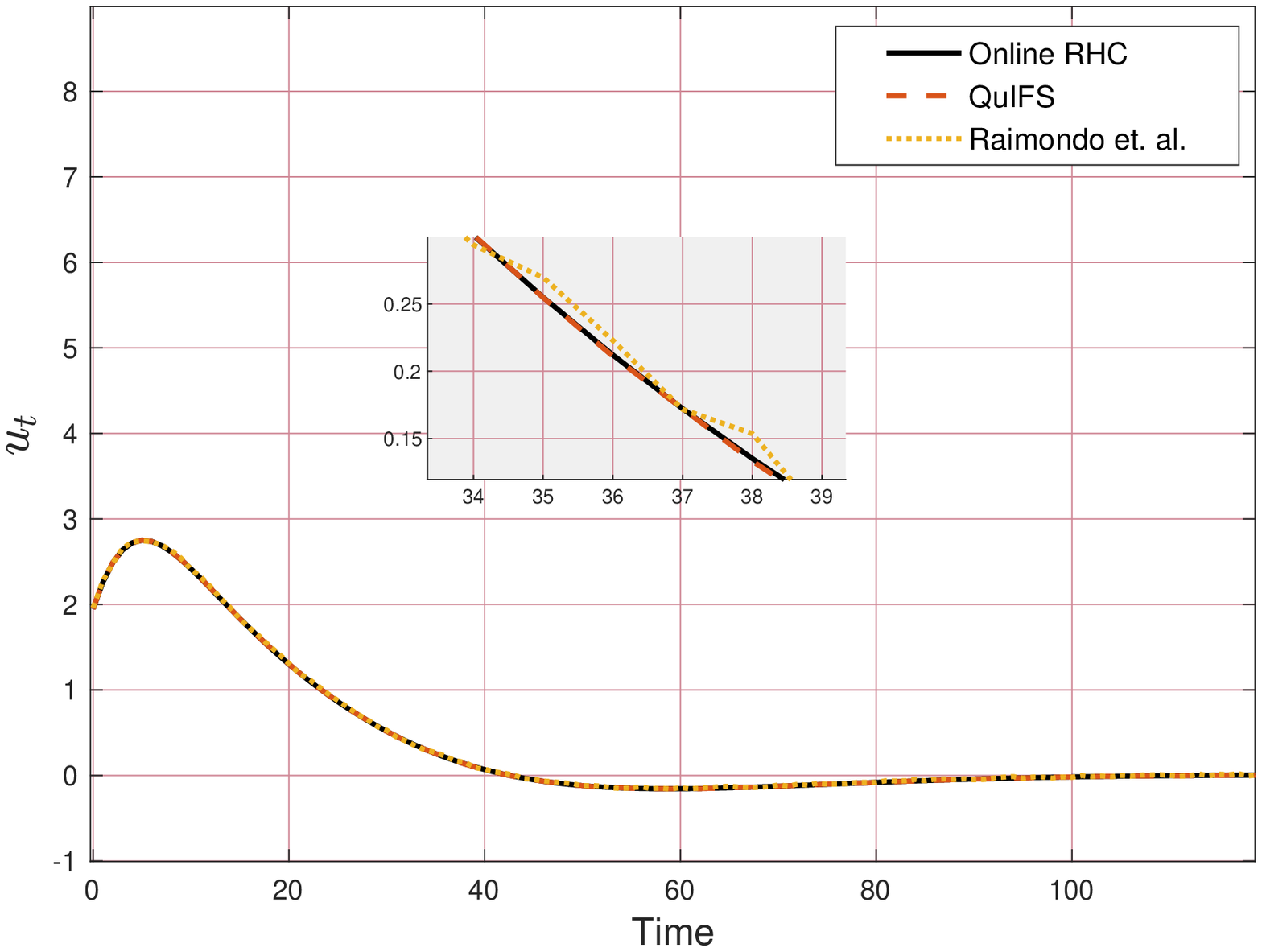}
\caption{Control trajectories for Example \ref{exmp:nmpc_3} generated by QuIFS, the algorithm reported in \cite{Raimondo2012}, and the online receding horizon control trajectory for a simulation length \(120\).}\label{fig:canale_control}
\end{figure}
\begin{table}[]
  \centering
\begin{tabular}{lccc}
\toprule
Method  & \textbf{CT} \(\bigl(\mutrunc(\cdot)\bigr)\) & \textbf{CT} (online) & Storage 
\\ \midrule
QuIFS & 51 sec & 0.8 m.sec & 35 KB   \\
\cite{Raimondo2012} &  45 sec & 0.8 m.sec & 28 KB 
\\ \bottomrule
    \end{tabular}
    \caption{Computation-time \textbf{CT} and storage requirement with \(\eps=0.05\) for Example \ref{exmp:nmpc_3}.}
  \label{tab:metadata_ex_nl_2}
\end{table}
\end{example}


\begin{example}\label{exmp:nmpc_1}
Consider the dynamical system \cite{Raimondo2012}:
\begin{align} \label{eq:nonlinear_example}
   \dot{\st}_1(t)&=\st_2(t)+ \bigl(0.5 +0.5 \st_1(t) \bigr)\ut(t) \nn \\\dot{\st}_2(t) &= \st_1(t)+ \bigl(0.5 -2\st_2(t) \bigr)\ut(t).
\end{align}
We discretize the system dynamics \eqref{eq:nonlinear_example} using a Runge-Kutta (RK4) discretization scheme with sampling time \(T_s \Let 0.1\). For a fixed time horizon \(\horizon \Let 15\), consider the finite horizon discrete-time optimal control problem 
\begin{align}\label{eq:exmp_2_nonlinear-original problem}
        \begin{aligned}
            &\inf_{(\dummyu_t)_{t=0}^{\horizon-1}}  && \hspace{-2mm}\inprod{\dummyx_N}{P\dummyx_N}+\sum_{t=0}^{\horizon-1}\inprod{\dummyx_t}{Q\dummyx_t}+\inprod{\dummyu_t}{R\dummyu_t}\\
            &\sbjto && \hspace{-2mm}\begin{cases}
          \text{discretized dynamics }\eqref{eq:nonlinear_example},\,\dummyx_0= \xz,\\
            \dummyx_t \in \Mbb,\,\dummyx_{\horizon} \in \Mbb_F \text{ and }\dummyu \in \Ubb\text{ for all }t \in \timestamp{0}{\horizon-1},
            \end{cases}
        \end{aligned}
    \end{align}
where \(\Mbb \Let \aset[]{\dummyx \in \Rbb^2 \suchthat \|\dummyx\|_{\infty} \le 1}\), \(\Ubb \Let \aset[]{\dummyu \in \Rbb \suchthat |\dummyu| \le 1}\), \(Q \Let 0.01 I_{2\times 2}\), \(R\Let 0.01\), and \(P \Let \begin{psmallmatrix}19.6415&13.1099 \\13.1099&19.6414
\end{psmallmatrix}\), which is found by solving the Lyapunov equations \cite{Raimondo2012}. The terminal region is \(\Mbb_{F} \Let \aset[\big]{\st \in \Rbb^2 \mid \inprod{\st}{P\st} \le 1}.\) The solution to \eqref{eq:exmp_2_nonlinear-original problem} is obtained by gridding the state-space \([-1,1] \times [-1,1] \) with step size \(h=0.0035\) (dictated by the QuIFS algorithm). We employed the NLP solver \(\mathsf{IPOPT}\) in YALMIP with the same computer specifications as in Example \eqref{exmp:lmpc_1} to solve the optimization problem \eqref{eq:exmp_2_nonlinear-original problem}, where per point computation-time was \(\sim\) \(3\) sec. The Lipschitz constant of \(\upopt(\cdot)\) is roughly \(L_{0} \approx 2\) as estimated by numerical techniques. For the quasi-interpolation scheme we choose the Laguerre generating function by fixing \(M_0=5\) and \(d=2\) in \eqref{num:high_order_basis}
\begin{align}\label{num:tenth_order_basis}
      & \Rbb^2 \ni x \mapsto \psi_{10}(x) \Let \frac{1}{\pi} \big{(} 5- 10\|x\|^2 +5\|x\|^4-\frac{5}{6}\|x\|^6  \nn \\& +\frac{1}{24}\|x\|^8\big{)}\epower{-\|x\|^2}.
\end{align}
which satisfies moment condition of order \(M=10\), and consequently \(C_{\gamma} = 1/11\). For illustration, let us fix a error tolerance \(\eps= 0.005\), and fix the shape parameter \(\Dd = 2\). Simple algebra leads to the parameter \(h = \frac{\eps}{3C_{\gamma}L_{0}\sqrt{\Dd}} = 0.0063\). Observe that the value of \(h\) is conservative in nature, and it may be possible to achieve the error tolerance with higher values of \(h\). We choose \(R_0=7\), and employ the quasi-interpolant \eqref{quasi_lmpc_ex_1} with \(\psi_{10}(\cdot)\) in place of \(\psi(\cdot)\) to generate the approximate policy \(\mutrunc(\cdot)\) and it is guaranteed that the pair \(\bigl(h,\Dd\bigr)=\bigl(0.0063,2\bigr)\) leads to \(\|\upopt(\cdot)-\mutrunc(\cdot)\|_{\mathrm{u}} \le 0.005\). Figure \eqref{fig:error_nmpc_ex_2} numerically verifies this fact. 
Corresponding to the error margin \(\eps=0.005\), the error surface is shown in Figure \eqref{fig:error_nmpc_ex_2}. Table \eqref{tab:metadata_nl_ex_1} records computation-time and storage requirements corresponding to an error margin \(\eps = 0.05\).

\begin{figure}[t]
\includegraphics[scale=0.4]{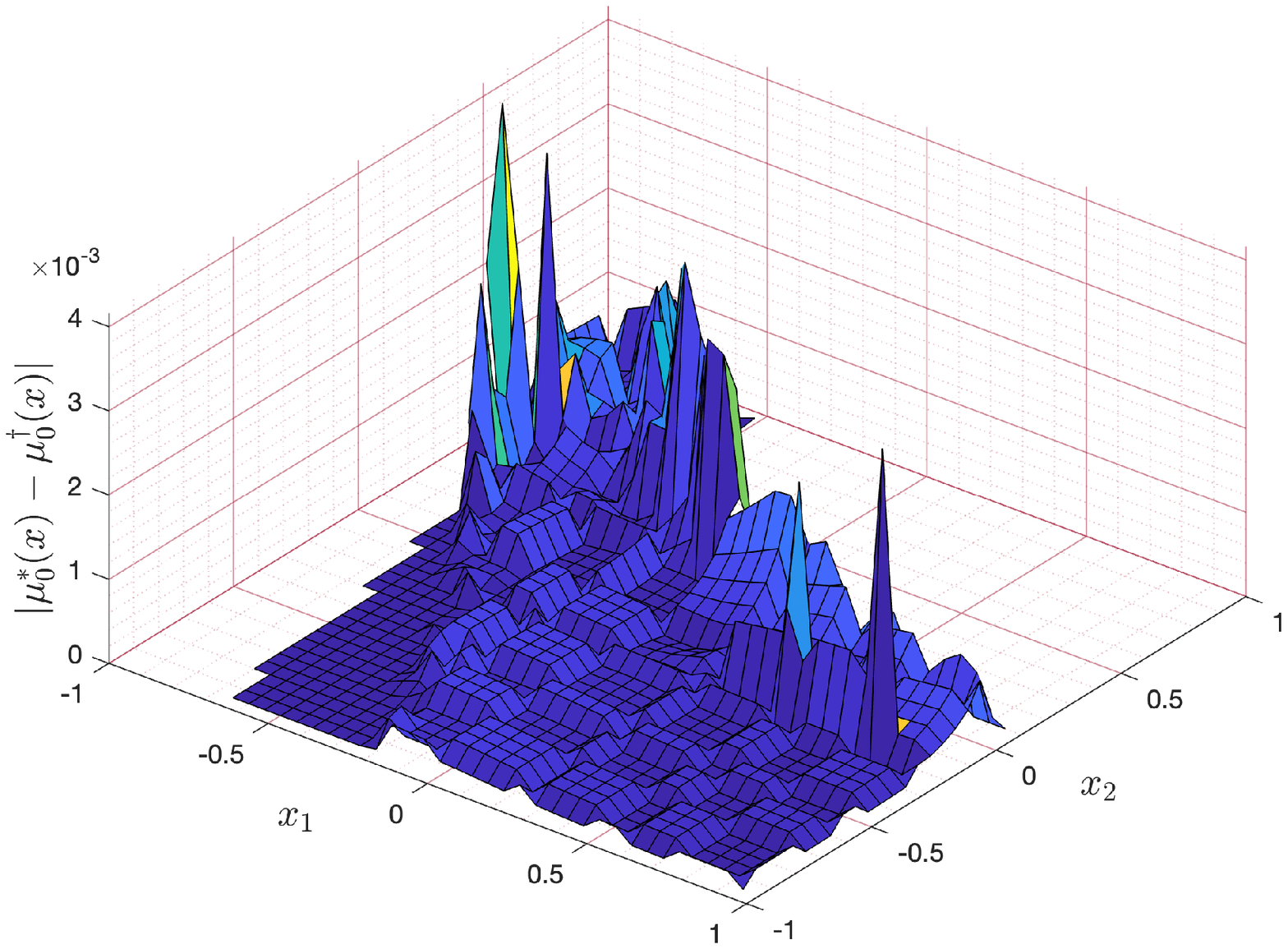}
\caption{The absolute error between \(\upopt(\cdot)\) and \(\mutrunc(\cdot)\) for Example \eqref{exmp:nmpc_1}.}\label{fig:error_nmpc_ex_2}
\end{figure}
\begin{figure}[t]
\centering
\includegraphics[scale=0.45]{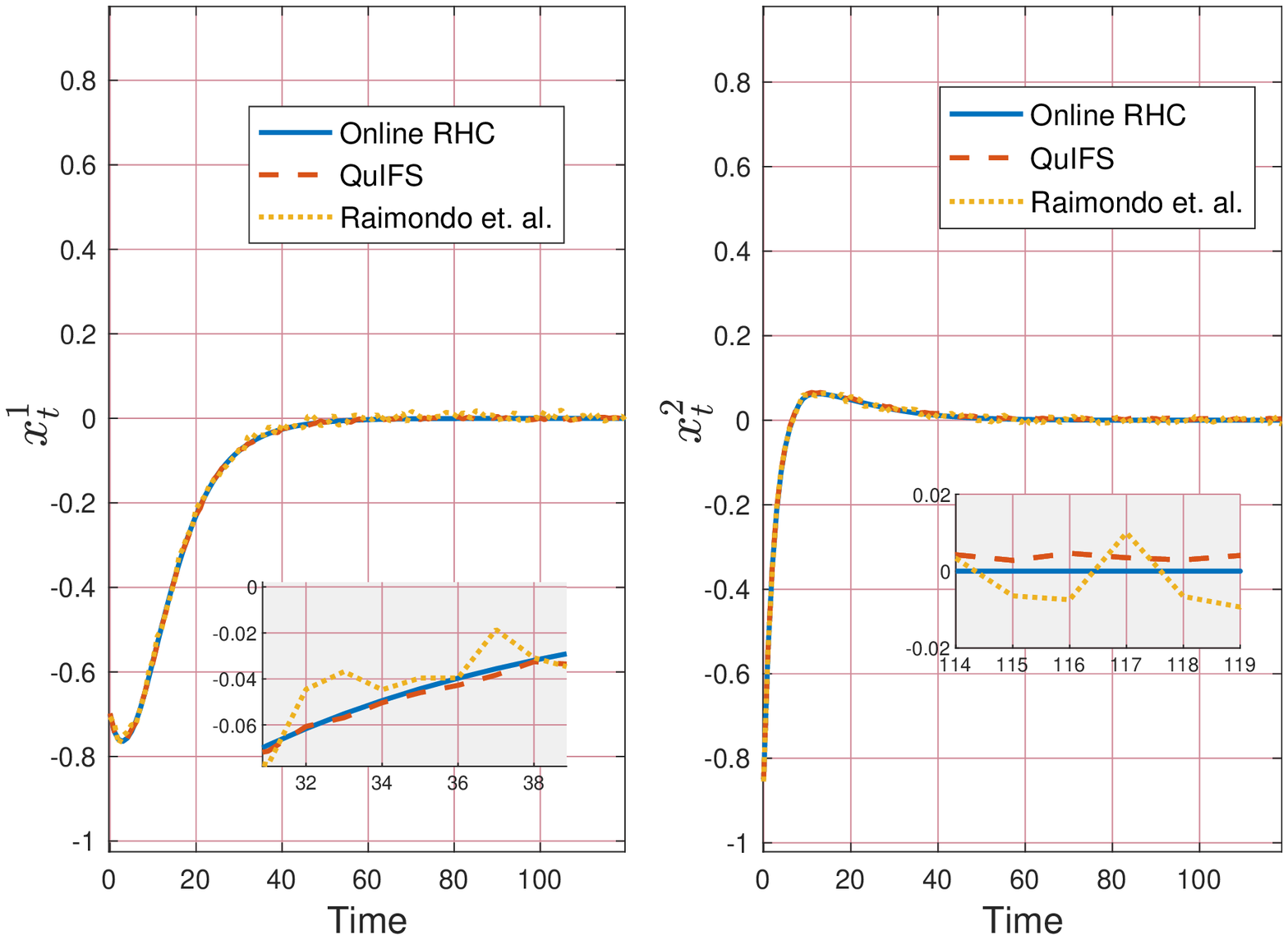}
\caption{State trajectories for Example \eqref{exmp:nmpc_1} generated by QuIFS, the algorithm reported in \cite{Raimondo2012}, and the online receding horizon state trajectories for \(x_0 \Let \begin{pmatrix}-0.7&-0.85 \end{pmatrix}^{\top}\) for a simulation length \(120\).}\label{fig:raimondo_state}
\end{figure}
\begin{figure}[t]
\centering
\includegraphics[scale=0.4]{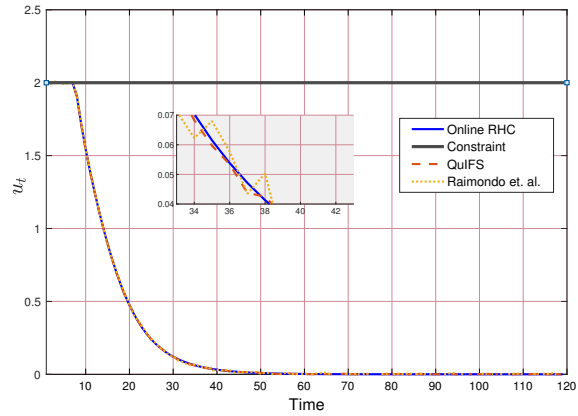}
\caption{Control trajectories for Example \eqref{exmp:nmpc_1} generated by QuIFS, the algorithm reported in \cite{Raimondo2012}, and the online receding horizon control trajectory for a simulation length \(120\). }\label{fig:raimondo_control}
\end{figure}
Moreover, it can be seen (see the enlarged snippets within the figures) that in terms of closeness to the online RHC trajectories QuIFS does a better job compared to the hierarchical grid-based technique reported in \cite{Raimondo2012}. Indeed, there is visibly less oscillatory behaviour under QuIFS compared to \cite{Raimondo2012}. 
\begin{table}[!h]
  \centering
\begin{tabular}{lccc}
\toprule
Method  & \textbf{CT} \(\bigl(\mutrunc(\cdot)\bigr)\) & \textbf{CT} (online) & Storage 
\\ \midrule
QuIFS & 77 sec  & 0.4 m.sec & 34 KB \\
\cite{Raimondo2012} & 61 sec & 0.7 m.sec & 30 KB
\\ \bottomrule
    \end{tabular}
    \caption{Computation-time \textbf{CT} and storage requirement with \(\eps=0.05\) for Example \eqref{exmp:nmpc_1}.}
  \label{tab:metadata_nl_ex_1}
\end{table}
\end{example}


\section{Concluding remarks}

We introduced QuIFS, a one-shot approximate feedback synthesis algorithm for nonlinear robust MPC problems, and we theoretically established guarantees for uniform error estimates between the optimal and the approximate feedback policies. It was also shown that under the explicit approximate policy, the closed-loop process is ISS-like stable and the ensuing optimization problem is recursively feasible under standard mild hypotheses. One of the future directions of this work involves employing techniques from deep learning to synthesize approximate feedback laws with one-shot and uniform guarantees of convergence and compare them with QuIFS. Natural extensions to the case of stochastic MPC are being developed and will be reported separately.

\bibliographystyle{amsalpha}
\bibliography{refs}

\section{Appendix B: Stability notions}\label{sec:appenb}

This section collects several definitions and results surrounding input to state stability of discrete-time controlled systems. The theory of input-to-state stability (ISS) and many of its variants have been extensively employed in the analysis of robust stability properties for both continuous-time \cite{ref:Sontag-95} and discrete-time \cite{ref:JiaWang-2001} nonlinear dynamical systems subjected to uncertainty. We recollect a few results on discrete-time regional ISS stability that are commonly employed in MPC:

The basic object under consideration is a discrete-time dynamical system
\begin{align}\label{appenb:sys}
    x_{t+1}= \field (x_t,w_t), \quad x_0=\xz\,\,\text{given},\,\,t \in \Nz,
\end{align}
where \(x_t \in \Rbb^d\) is the vector of states and \(w_t \in \Wbb \subset \Rbb^p\) is the input disturbance vector, \(\Wbb\) is a compact set and \(\field(0,0)=0\). By \(\st\bigl(t;\xz,w \bigr)\) we denote the state trajectory of the system \eqref{appenb:sys} with initial state \(\xz\) and input sequence \(w = (w_t)_{t\in\Nz}\). 

\begin{definition}\label{appen_b_RPI}\cite[Definition 4]{ref:LG:DMR:RS:RISS}
	A set \(\mathcal{X} \subset \Rbb^d\) is a \emph{robust positively invariant} set (RPI) for the system \eqref{appenb:sys} if \(f(x,w) \in \mathcal{X}\) for all \(x \in \mathcal{X}\) and for all \(w \in \Wbb\).
\end{definition}

\begin{definition}\label{appen_b_ISS_Lyap_Magni}\cite[Definition 8]{ref:LG:DMR:RS:RISS}
	Consider the system \eqref{appenb:sys}, and suppose that \(\mathcal{X} \subset \Rbb^d\) is compact robustly positive invariant set, and that \(\mathcal{Y}\) and \(\mathcal{Z}\) are compact sets containing the origin as an interior point and satisfying \(\mathcal{Z} \subset \mathcal{Y}\subset \mathcal{X}\). A function \(V:\Rbb^d \lra \lcro{0}{+\infty}\) is a \emph{regional ISS Lyapunov function} in \(\mathcal{X}\) if:
	\begin{itemize}[leftmargin=*, label=\(\triangleright\)] 
    \item there exist \(\classKinfty\) functions \(\alpha_1,\alpha_2\), and \(\alpha_3\) and a \(\classK\) function \(\sigma\) such that the following inequalities hold for all \(w \in \Wbb\):
\begin{align*}
    V(\dummyx) &\ge \alpha_1(|\dummyx|) \quad\,\text{for all}\, \,\dummyx\in \mathcal{X}, \nn \\ V(\dummyx) & \le \alpha_2(|\dummyx|)\quad \,\text{for all}\,\,\dummyx\in \mathcal{Y}, \nn \\ V\circ f\bigl(\dummyx,w) - V(\dummyx) & \le -\alpha_3(|\dummyx|)+\sigma(|w|)\quad\,\text{for all}\,\,\dummyx \in \mathcal{X};
\end{align*}
\item there exists a \(\classKinfty\) function \(\rho\) with the property that \((\mathrm{id}-\rho) \in \classKinfty\), \(\alpha_4 \Let \alpha_3 \circ \alpha_2^{-1}\), and \(b \Let \alpha_4^{-1} \circ \rho^{-1} \circ \sigma\) such that \(\mathcal{Z}\) can be defined for some arbitrary \(c>0\) in the following way:
\[
    \mathcal{Z} \Let \aset[\big]{\dummyx \in\Rbb^d \mid d(\dummyx,\partial \mathcal{Y})>c,\, V(\dummyx) \le b\bigl(\|w\|_{\infty}\bigr)},
\]
			where \(d(\dummyx, \partial\mathcal{Y})\) is the distance of \(\dummyx\) from the boundary of the set \(\mathcal Y\).
\end{itemize}
\end{definition}

\begin{definition}[{\cite[Definition A.3]{ref:Pin-parsini-2012-IJC-di}}]
	\label{appen_b_Regional_Practical_ISS}
	Given a compact set \(\mathcal{X} \subset\Rbb^d\), if \(\mathcal{X}\) is robustly positive invariant for the system \eqref{appenb:sys} and if there exists \(\beta \in \classKL\), \(\lambda \in \classK\) and \(\varphi \in \loro{0}{+\infty}\) such that 
\begin{align}
	\label{e:regional ISS}
    |x(t;\xz,w)| \le \max\aset[\bigg]{\beta\bigl(|\xz|,t\bigr),\lambda\biggl(\sup_{0 \le k \le t}|w_k|\biggr)}+\varphi
\end{align}
	for all \(t \in \Nz\) and for all \(\xz \in \mathcal{X}\), then the system \eqref{appenb:sys} is \emph{regional (practical) input-to-state stable} in \(\mathcal{X}\).
\end{definition}

\begin{theorem}\cite[Theorem 2]{ref:LG:DMR:RS:RISS}
	\label{thrm:appen_b_RISS}
	Let \(\mathcal{X}\) be a robustly positive invariant set for the system \eqref{appenb:sys} and suppose that the system \eqref{appenb:sys} admits a regional ISS-Lyapunov function in \(\mathcal{X}\). Then \eqref{appenb:sys} is regional ISS in \(\mathcal{X}\) in the sense of \eqref{e:regional ISS}.
\end{theorem}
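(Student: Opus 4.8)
The plan is to run the classical discrete-time Lyapunov-to-ISS argument (in the spirit of Jiang--Wang) and localize it to the invariant region \(\mathcal{X}\), letting the nesting \(\mathcal{Z}\subset\mathcal{Y}\subset\mathcal{X}\) and the additive constant \(\varphi\) absorb the regional defects. First I would collapse the three Lyapunov inequalities of Definition~\eqref{appen_b_ISS_Lyap_Magni} into a single scalar recursion along trajectories. For \(x_t\in\mathcal{Y}\) the bound \(V(x_t)\le\alpha_2(|x_t|)\) gives \(|x_t|\ge\alpha_2^{-1}(V(x_t))\), and since \(\alpha_3\) is increasing this yields \(\alpha_3(|x_t|)\ge\alpha_4(V(x_t))\) with \(\alpha_4\Let\alpha_3\circ\alpha_2^{-1}\in\classKinfty\). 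Substituting into the descent inequality \(V\circ\field(x_t,w_t)-V(x_t)\le-\alpha_3(|x_t|)+\sigma(|w_t|)\) produces
\[
    V(x_{t+1})\le V(x_t)-\alpha_4\bigl(V(x_t)\bigr)+\sigma(|w_t|),
\]
valid for every \(x_t\in\mathcal{Y}\) and \(w_t\in\Wbb\), with \(x_{t+1}\in\mathcal{X}\) guaranteed by robust positive invariance (Definition~\eqref{appen_b_RPI}).

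Second, I would exploit the gain margin \(\rho\) furnished by the hypothesis to convert this into a genuine contraction away from the target set. With \(b\Let\alpha_4^{-1}\circ\rho^{-1}\circ\sigma\) exactly as in the definition of \(\mathcal{Z}\), whenever \(V(x_t)>b(\|w\|_{\infty})\) one has \(\alpha_4(V(x_t))>\rho^{-1}(\sigma(\|w\|_{\infty}))\ge\rho^{-1}(\sigma(|w_t|))\), so that \(\sigma(|w_t|)\le\rho(\alpha_4(V(x_t)))\) and hence
\[
    V(x_{t+1})\le V(x_t)-(\mathrm{id}-\rho)\bigl(\alpha_4(V(x_t))\bigr).
\]
Since \((\mathrm{id}-\rho)\in\classKinfty\), the composite \(\widehat{\alpha}\Let(\mathrm{id}-\rho)\circ\alpha_4\in\classKinfty\) quantifies a strict decrease of \(V\) by a class-\(\classKinfty\) amount so long as the state stays in \(\mathcal{Y}\) and above the level \(b(\|w\|_{\infty})\).

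Third, I would feed the combined estimate \(V(x_{t+1})\le\max\{V(x_t)-\widehat{\alpha}(V(x_t)),\,b(\|w\|_{\infty})\}\) into the standard discrete-time comparison lemma to extract a \(\classKL\) transient: there is \(\widetilde{\beta}\in\classKL\) with \(V(x_t)\le\max\{\widetilde{\beta}(V(x_0),t),\,b(\|w\|_{\infty})\}\) while the trajectory remains in \(\mathcal{Y}\). Sandwiching by the outer bounds \(\alpha_1(|x_t|)\le V(x_t)\) on \(\mathcal{X}\) and \(V(x_0)\le\alpha_2(|x_0|)\) on \(\mathcal{Y}\), and inverting, then delivers
\[
    |x_t|\le\max\Bigl\{\beta(|x_0|,t),\,\lambda\bigl(\|w\|_{\infty}\bigr)\Bigr\}
\]
with \(\beta(s,t)\Let\alpha_1^{-1}(\widetilde{\beta}(\alpha_2(s),t))\in\classKL\) and \(\lambda\Let\alpha_1^{-1}\circ b\in\classK\), which is exactly the decay structure demanded by \eqref{e:regional ISS}.

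The hard part --- and the reason the estimate is only \emph{practical} --- is the bookkeeping of the nested sets, because every step above that invokes \(V\le\alpha_2(|\cdot|)\) is licensed only on \(\mathcal{Y}\), whereas invariance places the trajectory merely in the larger set \(\mathcal{X}\). I would argue that the margin \(c>0\) embedded in \(\mathcal{Z}\) supplies a buffer from \(\partial\mathcal{Y}\) which, together with the continuity of \(\field\) on the compact product \(\mathcal{X}\times\Wbb\), certifies that the target sublevel set is reached and is practically invariant, so that the contraction of the second step is self-consistent; and for initial data in \(\mathcal{X}\setminus\mathcal{Y}\) the compactness of \(\mathcal{X}\) bounds \(|x_t|\) uniformly during the finite excursion preceding the first entry into \(\mathcal{Y}\). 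The residual mismatch among \(\mathcal{Z}\), \(\mathcal{Y}\), and \(\mathcal{X}\) --- controlled by \(\mathrm{diam}(\mathcal{X})\) and \(c\) --- is precisely what is gathered into the additive constant \(\varphi\in\loro{0}{+\infty}\). Assembling the transient \(\classKL\) bound, the asymptotic-gain \(\classK\) bound, and this \(\varphi\) yields \eqref{e:regional ISS}, completing the proof.
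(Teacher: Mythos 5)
A point of reference first: the paper itself does not prove this theorem --- it is imported verbatim, with citation, from \cite[Theorem 2]{ref:LG:DMR:RS:RISS}, so your attempt can only be measured against the standard proof in that source. Measured that way, your architecture is correct and matches it: the implication-form rewriting \(V(x_{t+1})\le V(x_t)-\alpha_4(V(x_t))+\sigma(|w_t|)\) on \(\mathcal{Y}\), the gain-margin contraction via \((\mathrm{id}-\rho)\circ\alpha_4\) above the level \(b(\|w\|_{\infty})\), the discrete-time comparison lemma producing \(\widetilde{\beta}\in\classKL\), and the sandwich through \(\alpha_1,\alpha_2\) giving \(\beta(s,t)=\alpha_1^{-1}(\widetilde{\beta}(\alpha_2(s),t))\) and \(\lambda=\alpha_1^{-1}\circ b\) are exactly the steps of the Jiang--Wang-style argument localized to \(\mathcal{X}\), with the practical offset \(\varphi\) absorbing the regional defects.

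The one genuinely under-justified step is your final paragraph. Continuity of \(\field\) on \(\mathcal{X}\times\Wbb\) and compactness of \(\mathcal{X}\) do not by themselves ``certify that the target sublevel set is reached,'' nor do they make the excursion in \(\mathcal{X}\setminus\mathcal{Y}\) finite: compactness bounds \(|x_t|\) but says nothing about entry into \(\mathcal{Y}\). The mechanism that does the work --- and that the cited proof uses --- is that the dissipation inequality \(V\circ\field(x,w)-V(x)\le-\alpha_3(|x|)+\sigma(|w|)\) is assumed on all of \(\mathcal{X}\), not merely on \(\mathcal{Y}\); since \(\mathcal{Y}\) contains the origin in its interior, there is \(\delta>0\) with \(|x|\ge\delta\) for \(x\in\mathcal{X}\setminus\mathcal{Y}\), so each step spent outside \(\mathcal{Y}\) decreases \(V\) by at least \(\alpha_3(\delta)-\sigma(\|w\|_{\infty})\); when the disturbance is small enough for this quantity to be positive, nonnegativity of \(V\) caps the number of such steps and the bound \(V\ge\alpha_1(|\cdot|)\) steers the state into \(\mathcal{Y}\), while the complementary large-disturbance regime is precisely what the margin \(c\) in the definition of \(\mathcal{Z}\) and the additive \(\varphi\) are designed to cover. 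Relatedly, your transient bound invokes \(V(x_0)\le\alpha_2(|x_0|)\), which is licensed only for \(x_0\in\mathcal{Y}\); for \(x_0\in\mathcal{X}\setminus\mathcal{Y}\) one must first bound \(V\) along the finite excursion using the decrease inequality before the comparison lemma applies. With those repairs --- which is how \cite{ref:LG:DMR:RS:RISS} in fact argues --- your sketch becomes the standard proof.
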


\end{document}